\newcommand{\inj}{\hookrightarrow}
\numberwithin{equation}{section}
\def \mb{\mbox}
\newcommand{\nc}{\newcommand}
\newcommand{\ner}{\mb{\tiny Ner}}
\DeclareMathOperator{\Spec}{\mathrm{Spec}}
\newcommand{\Hom}{\mathrm{Hom}}
\newcommand{\End}{\mathrm{End}}
\newcommand{\Ext}{\mathrm{Ext}}
\newcommand{\Res}{\mb{Res}}
\newcommand{\beqar}{\begin{eqnarray*}}
\newcommand{\eeqar}{\end{eqnarray*}}
\newcommand{\oldmarginpar}[1]{}
\newcommand{\mcal}[1]{\mathcal{#1}}
\begin{document}
\nc{\Mhf}{{\bf M}_{0,H'}'}
\newcommand{\Muf}{{\bf M}'_{bal.U_1(p),H'}}
\newcommand{\Mho}{M_{0,H}}
\newcommand{\Muo}{M_{bal.U_1(p),H}}
\newcommand{\Mh}{M'_{0,H'}}
\newcommand{\Mu}{M'_{bal.U_1(p),H'}}
\newcommand{\Mucan}{{M'}_{bal.U_1(p),H'}^{{can}}}
\newcommand{\oMh}{\overline{M'}_{0,H'}}
\newcommand{\oMu}{\overline{M'}_{bal.U_1(p),H'}}
\newcommand{\pomega}{\omega^+}
\newcommand{\Eg}{{E'_1}\mid_S}
\newcommand{\Ig}{\mb{Ig}}

\newcommand{\Fab}{F_{\tiny \mb{abs}}}
\newcommand{\Frel}{F_{\tiny \mb{rel}}}
\nc{\sym}{\Psi}

\nc{\Gm}{\mathbb{G}_m}

\newcommand{\MH}{{\widehat{M}}'_{0,H'}}
\newcommand{\MU}{{\widehat{M}}'_{bal.U(p),H'}}
\newcommand{\Xfo}{{\widehat{X}}_!}
\newcommand{\Xfor}{{\widehat{X}}_{!!}}
\newcommand{\Xp}{\overline{X}_!}
\newcommand{\Xpp}{\overline{X}_{!!}}
\newcommand{\opi}{\overline{\pi}}
\newcommand{\ohat}[1]{\widehat{#1}}
\newcommand{\fsharp}{{\bold f}^\sharp_{{\tiny \Pi}}}
\newcommand{\mfrak}[1]{\mathfrak{#1}}
\newcommand{\bA}{{\bf A}}
\newcommand{\Mig}{\overline{M'}_{Ig,H'}}
\newcommand{\gp}{\mathfrak{p}}
\newcommand{\gP}{\mathfrak{P}}
\newcommand{\gl}{\mathfrak{l}}
\newcommand{\sA}{\mathcal{A}}

\newcommand{\bF}{\mathbb{F}}

\newcommand{\bb}[1]{\mathbb{#1}}
\newcommand{\cX}{\mathcal{X}}
\newcommand\A{\mathbb{A}}
\newcommand\C{\mathbb{C}}
\newcommand\G{\mathbb{G}}
\newcommand\N{\mathbb{N}}
\newcommand\T{\mathbb{T}}
\newcommand\sE{{\mathcal{E}}}
\newcommand\tE{{\mathbb{E}}}
\newcommand\sF{{\mathcal{F}}}
\newcommand\sG{{\mathcal{G}}}
\newcommand\GL{{\mathrm{GL}}}
\newcommand\bM{\mathbb{M}}
\newcommand\HH{{\mathrm H}}
\newcommand\mM{{\mathrm M}}
\newcommand\J{\mathfrak{J}}
\newcommand\sT{\mathcal{T}}
\newcommand\Qbar{{\bar{\Q}}}
\newcommand\sQ{{\mathcal{Q}}}
\newcommand\sP{{\mathbb{P}}}
\newcommand{\Q}{\mathbb{Q}}
\newcommand{\tH}{\mathbb{H}}
\newcommand{\Z}{\mathbb{Z}}
\newcommand{\R}{\mathbb{R}}
\newcommand{\F}{\mathbb{F}}
\newcommand\fP{\mathfrak{P}}
\newcommand\Gal{{\mathrm {Gal}}}
\newcommand{\Ou}{\mathcal{O}}
\newcommand{\legendre}[2] {\left(\frac{#1}{#2}\right)}
\newcommand\iso{{\> \simeq \>}}
\newcommand{\M}{\mathbb{M}}
\newcommand{\m}{\mathcal{m}}

\newtheorem {theorem} {\bf{Theorem}}[section]
\newtheorem {lemma}[theorem] {\bf Lemma}
\newtheorem {prop}[theorem]{\bf Proposition}
\newtheorem {proposition}[theorem] {\bf Proposition}
\newtheorem {exercise}{Exercise}[section]
\newtheorem {example}[theorem]{Example}
\newtheorem {remark}[theorem]{Remark}
\newtheorem {definition}[theorem]{\bf Definition}
\newtheorem {corollary}[theorem] {\bf Corollary}

\newtheorem{thm}{Theorem}
\newtheorem{cor}[thm]{Corollary}
\newtheorem{conj}[thm]{Conjecture}
\newtheorem{quen}[thm]{Question}
\theoremstyle{definition}
\newtheorem{claim}[thm]{Claim}
\newtheorem{que}[thm]{Question}
\newtheorem{lem}[thm]{Lemma}
\def \vp{\varphi}
\newcommand{\wh}[1]{\widehat{#1}}
\newcommand{\pip}{\Pi}
\newcommand{\Xdag}{\widehat{X}_\dagger}

\theoremstyle{definition}
\newtheorem{dfn}{Definition}

\theoremstyle{remark}


\newcommand {\stk} {\stackrel}
\newcommand{\map}{\rightarrow}
\def \bX{{\bf X}}
\newcommand {\orho}{\overline{\rho}}

\newcommand{\hGm}{\wh{\G}_m}

\newcommand{\Ner}[1]{{#1}^{\ner}}
\newcommand{\red}[1]{{\color{red} #1}}
\newcommand{\blue}[1]{{\color{blue} #1}}

\theoremstyle{remark}
\newtheorem*{fact}{Fact}
\makeatletter
\def\imod#1{\allowbreak\mkern10mu({\operator@font mod}\,\,#1)}
\makeatother

\title{Towards a mod-$p$ Lubin-Tate theory for $\GL_2$ over totally real fields}
\author{Debargha Banerjee}
\author{Vivek Rai}
\address{INDIAN INSTITUTE OF SCIENCE EDUCATION AND RESEARCH, PUNE, INDIA}

\begin{abstract} We show that the conjectural mod $p$ local Langlands correspondence can be realised in the mod $p$ cohomology of the Lubin-Tate towers. The proof utilizes a well known conjecture of Buzzard-Diamond-Jarvis \cite[Conj. 4.9]{BDJ10}, a study of completed cohomology of the ordinary and supersingular locus of the Shimura curves for a totally real field $F$ and of mod $l(\neq p)$ local Langlands correspondence as given by Emerton-Helm \cite{EmertonHelm14}. 
In the case of modular curves a similar theorem was obtained by Chojecki \cite{Cho15}. 
\end{abstract}

\subjclass[2010]{Primary: 11F80, Secondary: 11F11, 11F20, 11F30}
\keywords{Galois representations, Completed cohomology, Shimura curves}
\maketitle

\section{Introduction}
\label{Introduction}
The aim of this article is to study the relationship of the mod $p$ local Langlands correspondence with the mod $p$ completed cohomology of Shimura curves over a totally real field $F$ and with the mod $p$ cohomology of Lubin-Tate tower. Assuming a well known conjecture of Buzzard-Diamond-Jarvis \cite[Conj. 4.9]{BDJ10}, we prove in Theorem \ref{Mainthm} that the conjectural mod $p$ local Langlands correspondence lies in the mod $p$ completed cohomology of the supersingular part of the Shimura curve. 
By further assuming a Serre type Conjecture \ref{Serretype}, in Theorem \ref{LT}, we show that the conjectural mod $p$ local Langlands correspondence lies in the mod $p$ cohomology of the Lubin-Tate towers. 

In recent years quite a progress have been made to establish the local Langlands correspondence (both $p$-adic and mod $p$) but unfortunately the complete picture is not clear even for $\GL_2$. 
The mod $p$ and $p$-adic local Langlands correspondence has been established for $\GL_2(\Q_p)$ by Breuil, Berger, Colmez, Dospinescu, Paskunas and others. But for $\GL_2(E)$, when $E (\neq \Q_p)$ is a finite extension of $\Q_p$ the correspondence is not known. Roughly speaking the aim of the mod $p$ local Langlands is to give a correspondence between the $n$-dimensional continuous mod $p$ representation of $G_E:= \Gal(\bar{E}/E)$ and smooth admissible mod $p$ representation of $\GL_n(E)$. The picture on the Galois side is much simpler and it is the automorphic side which should be understood better in order to give such a correspondence. For $\GL_2(\Q_p)$ the classification of irreducible smooth admissible representation was started by Barthel-Livn\'e \cite{BL94} and completed by Breuil \cite{Breuil03} wherein he classified all the supersingular representation for $\GL_2(\Q_p)$ but for $\GL_2(E)$ the classification is still incomplete mainly because of the mysterious supersingular representations.

 The following are some basic differences between the smooth representations of $\GL_2(\Q_p)$ and $\GL_2(E)$:
\begin{itemize}
\item Unlike $\GL_2(\Q_p)$ the complete classification of supersingular representations for $\GL_2(E)$ is not known. The study of such representations were carried out by Breuil-Paskunas \cite{BP12} wherein they constructed infinite family of smooth admissible supersingular representations using the theory of diagrams. 
\item Unlike $\GL_2(\Q_p)$ there exist smooth representations of $\GL_2(E)$ which are not admissible \cite{Le19}, \cite{GS20}.
\item Under the assumption of Buzzard-Diamond-Jarvis conjecture it was shown by Hu \cite{Hu17} that for $\GL_2(E)$ there are non-trivial extensions between supersingular and principal series representations which is not the case for $\GL_2(\Q_p)$ (we will come back to this point later in the introduction).
\end{itemize}
Because of these reasons among others its quite difficult to establish mod $p$ local Langlands correspondence for $\GL_2(E)$. However there has some been recent progress on the Gelfand-Kirillov dimension of the possible candidate for such a correspondence, see \cite{HuWang20},\cite{breuil2020gelfandkirillov}.

Now coming to the global picture, Galois representations of $G_{\Q}:=\Gal(\bar{\Q}/\Q)$ coming from geometry (for instance modular Galois representation) are often realised in the \'etale cohomology of modular curves. In \cite[Theorem 1.2.6]{Emerton11}, Emerton showed that the mod~$p$ completed cohomology (constructed by taking direct limit over \'etale cohomology of modular curves for finite levels) of modular curve realises the mod $l$ local Langlands correspondence for all primes $l$, in particular it realises mod $p$ local Langlands correspondence (see \cite[Section 7]{Shin} for a different proof of this local-global compatibility result). This result is a special case of Buzzard-Diamond-Jarvis conjecture \cite[Conj. 4.9]{BDJ10} which states that the completed cohomology of Shimura curve (over a totally real field $F$) realises the mod $l$ local Langlands correspondences for $\GL_2(F_l)$ where $F_l$ denotes the completion of $F$ at the prime $l$ (for the precise statement see Conjecture~\ref{BDJ}). The local factors appearing in Conjecture \ref{BDJ} is studied in \cite{EmertonHelm14}, \cite{Helm13} for primes $l\nmid p$ and in \cite{Hu17},\cite{breuil2020gelfandkirillov},\cite{HuWang20} for primes lying above $p$.
Therefore to salvage information about mod $p$ local Langlands correspondence its natural to study the completed cohomology of Shimura curves. In fact in this article we will be also be concerned with the completed cohomology of ordinary and supersingular locus of Shimura curves. 

On the other hand the classical local Langlands correspondence for $\GL_n$ by Harris-Taylor \cite{HarrisTaylor01} was realised in the $l$-adic cohomology of Lubin-Tate towers. Also Carayol in \cite{Carayol1986}, \cite{Carayol88} using cohomology of vanishing cycles define `local fundamental representation' and showed that it decomposes into a product of Jacquet-Langlands correspondence and local Langlands correspondence.
Hence one hopes that mod $p$ local Langlands correspondence can also be realised inside the cohomology of Lubin-Tate towers (for details on Lubin-Tate tower see \cite{Dat12}). In fact for the case $l =p$, Chojecki \cite[Section 6]{Cho15} defines an analogue of the `local fundamental representation' and showed that the mod $p$ local Langlands correspondence for $\GL_2(\Q_p)$ injects into the cohomology of Lubin-Tate tower. Following Chojecki we will define `local fundamental representation' and show that it realises the conjectural mod $p$ local Langlands correspondence for Shimura curves. For a study of the $p$-adic cohomology of the Lubin-Tate towers see \cite{Scholze}.

To state our results we first define few notation. Fix an odd prime $p$, let $F$ be a totally real number field of finite degree $d>1$. Let $D$ be a quaternion algebra over $F$ such that $D$ splits at one real place of $F$ and non-split at other real places, also assume that there is only one prime $\mfrak{p}$ of $F$ lying above $p$ and $D$ splits at $\mfrak{p}$. One can associate to $D$ a system of quaternion Shimura curves $M_K$ indexed by compact open subgroups $K$ of $D_f = (D \otimes_F \mathbb{A}_f)^\times$ where $\mathbb{A}_f$ is finite adele of $F$. Let $\rho$ be an absolutely irreducible continuous two-dimensional representation of $G_F :=\Gal(\bar{F}/F)$ over $\mathbb{\bar{F}}_p$ and let $H^1_{\mathbb{\bar{F}}_p}$ denotes the completed cohomology of Shimura curves $M_K$. Let $F_{\mfrak{p}}$ be the completion of $F$ at $\mfrak{p}$, also assume that $\rho_{\mfrak{p}}:=\rho|_{\Gal(\bar{\Q}_p/F_{\mfrak{p}})}$ is absolutely irreducible. Then the conjectural mod $p$ local Langlands correspondence associates $\rho_{\mfrak{p}}$ to a supersingular representation of $\GL_2(F_{\mfrak{p}})$ say $\pi_{\mfrak{p}}$.  
We prove the following theorem.

\begin{theorem}\label{Mainthm}
Let $\rho : G_F \rightarrow \GL_2(\bar{\mathbb{F}}_p)$ be continuous, irreducible and totally odd representation such that $\rho_\mfrak{p}$ is irreducible. Assuming Conjecture \ref{BDJ} we have a continuous $G_F\times\GL_2(F_\mfrak{p})\times \Pi_{w\in\Sigma\setminus \Sigma'}(D\otimes_F F_w)^\times$- equivariant injection 
$$\rho\otimes \pi_\mfrak{p}\otimes (\otimes_{w\in \Sigma\setminus \Sigma'}\pi_w) \hookrightarrow H^1_{\bar{\F}_p, ss}[\mfrak{m}_\rho],$$
where $H^1_{\bar{\F}_p,\text{ss}}$ denotes the mod $p$ cohomology of the supresingular locus of $M_K$ (see \eqref{supersingular cohomology}).
\end{theorem}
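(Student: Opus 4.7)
The plan is to bootstrap the BDJ injection into the full completed cohomology $H^1_{\bar{\F}_p}[\mfrak{m}_\rho]$ up to an injection into the supersingular component $H^1_{\bar{\F}_p,\text{ss}}[\mfrak{m}_\rho]$. The geometric input is the stratification of (an integral model of) $M_K$ at $\mfrak{p}$ into an open ordinary locus and a zero-dimensional closed supersingular locus; the arithmetic input is the irreducibility hypothesis on $\rho_\mfrak{p}$, which will be used to kill the ordinary contribution after localizing at $\mfrak{m}_\rho$.

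First I would apply Conjecture~\ref{BDJ} directly to produce the continuous $G_F\times\GL_2(F_\mfrak{p})\times \prod_{w\in\Sigma\setminus\Sigma'}(D\otimes_F F_w)^\times$-equivariant injection
\[
\rho\otimes\pi_\mfrak{p}\otimes\Bigl(\bigotimes_{w\in\Sigma\setminus\Sigma'}\pi_w\Bigr)\hookrightarrow H^1_{\bar{\F}_p}[\mfrak{m}_\rho].
\]
Next, I would set up the completed-cohomology decomposition coming from the ordinary/supersingular stratification: pass to the rigid analytic generic fibre, form the tubes of the two strata, and take the limit in level at $\mfrak{p}$. Excision for an open immersion together with its finite closed complement yields a Hecke/Galois equivariant long exact sequence of the form
\[
\cdots \to H^1_{\bar{\F}_p,\text{ss}} \to H^1_{\bar{\F}_p} \to H^1_{\bar{\F}_p,\text{ord}} \to H^2_{\bar{\F}_p,\text{ss}} \to \cdots
\]
so that, after taking $\mfrak{m}_\rho$-torsion, the remaining task is to show that $H^1_{\bar{\F}_p,\text{ord}}[\mfrak{m}_\rho]=0$.

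The heart of the argument, and what I expect to be the main obstacle, is this vanishing. The guiding principle is that the ordinary completed cohomology only supports Hecke eigensystems whose associated Galois representation becomes reducible upon restriction to $G_{F_\mfrak{p}}$; since $\rho_\mfrak{p}$ is irreducible by hypothesis, no such eigensystem can correspond to $\mfrak{m}_\rho$. To make this rigorous I would (i) realize $H^1_{\bar{\F}_p,\text{ord}}$ as completed cohomology of appropriate Igusa-type covers of the ordinary locus, (ii) isolate a well-defined $U_\mfrak{p}$-action of slope zero on this space so that every Hecke eigensystem appearing is forced to be ordinary at $\mfrak{p}$, and (iii) invoke local-global compatibility for ordinary families to conclude that any $\rho'$ cut out by a maximal ideal in the support must have reducible restriction to $G_{F_\mfrak{p}}$, contradicting $\rho'\cong\rho$.

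Granting Step three, the long exact sequence localized at $\mfrak{m}_\rho$ collapses to an isomorphism $H^1_{\bar{\F}_p,\text{ss}}[\mfrak{m}_\rho]\stackrel{\sim}{\to}H^1_{\bar{\F}_p}[\mfrak{m}_\rho]$, through which the BDJ injection factors while retaining equivariance for the full group $G_F\times\GL_2(F_\mfrak{p})\times \prod_{w\in\Sigma\setminus\Sigma'}(D\otimes_F F_w)^\times$ since every step is functorial. The expected technical difficulty is concentrated in steps (ii)--(iii) above: one must carefully separate the slope-zero part of the $U_\mfrak{p}$-action, check that it persists through the completed limit in level at $\mfrak{p}$, and match it with a known ordinary local-global compatibility statement strong enough to force reducibility of the restriction to $G_{F_\mfrak{p}}$.
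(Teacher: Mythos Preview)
There are two substantive gaps. First, in the rigid-analytic generic fibre the tubes reverse the open/closed roles: the tube over the finite supersingular locus is \emph{open} and the ordinary tube is \emph{closed}, so the correct excision sequence (the paper's \eqref{no compact support}) reads $H^1_{M^{\rm ord}}(M,\bar{\F}_p)\to H^1(M,\bar{\F}_p)\to H^1(M^{\rm ss},\bar{\F}_p)$, with the supersingular cohomology $H^1_{\bar{\F}_p,{\rm ss}}=\varinjlim H^1(M^{\rm ss},\bar{\F}_p)$ sitting on the \emph{right}. Your sequence, and the isomorphism you extract from it, are oriented the wrong way for the stated target.

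Second, and more seriously, the paper neither proves nor needs the vanishing $H^1_{\rm ord}[\mfrak{m}_\rho]=0$, and your route to it is doubtful: $\mfrak{m}_\rho$ is cut out by Hecke operators at places $v\notin\Sigma$, so it cannot by itself detect behaviour at $\mfrak{p}$, and the $U_\mfrak{p}$/slope-zero/ordinary-family apparatus you invoke is $p$-adic Hida theory that does not straightforwardly survive the passage to mod~$p$ coefficients and the direct limit over all $K_\mfrak{p}$. The paper's mechanism is purely representation-theoretic on the automorphic side: the cohomology supported on the ordinary locus is parabolically induced from the Borel (via the decomposition of $\bar{M}^{\rm ord}_{n,H}$ over $\mathbb{P}^1(\mcal{O}_\mfrak{p}/\mfrak{p}^n)$, see \eqref{decomposition1}--\eqref{decomposition2}) and hence admits no supersingular subquotient; since irreducibility of $\rho_\mfrak{p}$ forces $\pi_\mfrak{p}$ to be supersingular, applying $\Hom_{\GL_2(F_\mfrak{p})}(\pi_\mfrak{p},-)$ to the short exact sequence kills the ordinary contribution and, after tensoring back by $\pi_\mfrak{p}$ via the evaluation map, yields the injection into $H^1_{\bar{\F}_p,{\rm ss}}[\mfrak{m}_\rho]$. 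A further step you omit is the passage from BDJ's full restricted tensor $\rho\otimes(\otimes'_w\pi_w)$ to the asserted shape with only the factors at $w\in\Sigma\setminus\Sigma'$: this requires taking $K$-invariants for a tame level $K=K_{\Sigma'}K^\Sigma$ with $\dim_{\bar{\F}_p}\pi_w^{K_w}=1$ at each split place $w\in\Sigma'$, which is precisely the content of Theorem~\ref{vectors}.
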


Note that for $F= \Q$, the above theorem is proved by Chojecki \cite[Theorem 6.3]{Cho15}. Before we give the strategy of our proof let us mention one crucial difference between the proof of Theorem \ref{Mainthm} and of \cite[Theorem 6.3]{Cho15}. Chojecki uses the localisation functor at a supersingular representation $\pi$ of $\GL_2(\Q_p)$ and it is a result of Paskunas \cite{Paskunas13} that this functor decomposes the category of smooth admissible representation of $\GL_2(\Q_p)$ into a direct sum of categories of smooth representations of $\GL_2(\Q_p)$ such that the first direct summand has all those representations whose irreducible subquotients are isomorphic to $\pi$ and the second summand consists of smooth representations of $\GL_2(\Q_p)$ such that none of its irreducible subquotients are isomorphic to $\pi$. Such a decomposition does not occur for the category of smooth admissible mod $p$ representations of $\GL_2(E)$. In fact it is shown by Hu \cite{Hu17} that in tame case under the assumption of \cite[Conjecture 4.9]{BDJ10}, there exist non-trivial extensions between the principal series representations and supersingular representations. 

We state Conjecture \ref{Serretype} in Section \ref{lubin-tate} which is a generalisation of \cite[TH\'EOR\`EME, p.282]{Serre96}, comparing the system of eigenvalues for the Hecke operators in the Hecke algebra for two related quaternion algebras for $D$ and $\bar{D}$. Assuming Conjecture \ref{BDJ} and Conjecture \ref{Serretype}, we prove the following theorem.
\begin{theorem} \label{LT}
Let $\rho$ be as in Theorem \ref{Mainthm}, then we have a continuous $G_F\times\GL_2(F_\mfrak{p})\times \Pi_{w\in\Sigma\setminus \Sigma'}(D\otimes_F F_w)^\times$-equivariant injection
$$\rho\otimes \pi_\mfrak{p}\otimes (\otimes_{w\in \Sigma\setminus \Sigma'}\pi_w) \hookrightarrow   \hat{H}^1_{\text{LT}, \bar{\F}_p},$$
where $\hat{H}^1_{\text{LT}, \bar{\F}_p}$ is the first Lubin-Tate cohomology (see \ref{def lt}).
\end{theorem}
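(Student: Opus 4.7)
The plan is to transfer the injection produced by Theorem \ref{Mainthm} across the $p$-adic uniformization of the supersingular locus (Rapoport-Zink, in the spirit of Cerednik-Drinfeld). Since $\mfrak{p}$ is split in $D$, a formal local model for $M_K$ along its supersingular locus at $\mfrak{p}$ is furnished by the Lubin-Tate tower, twisted by the action of the auxiliary quaternion algebra $\bar{D}$ over $F$ obtained from $D$ by interchanging its invariants at $\mfrak{p}$ and at the unique split archimedean place. By construction $\bar{D}$ is totally definite and ramified at $\mfrak{p}$; this is precisely the algebra appearing in the formulation of Conjecture \ref{Serretype}.

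The concrete steps are as follows. First, set up the uniformization at finite level and pass to the limit to obtain a $G_F \times \GL_2(F_\mfrak{p}) \times \bar{D}_f^\times$-equivariant description of the shape
\[
H^1_{\bar{\F}_p, ss} \; \iso \; \hat{H}^1_{\text{LT}, \bar{\F}_p} \, \widehat{\otimes}_{\bar{D}_\mfrak{p}^\times} \, \sA(\bar{D}),
\]
where $\sA(\bar{D})$ denotes the space of mod $p$ automorphic forms on the totally definite algebra $\bar{D}$ at a matching tame level, carrying the commuting Hecke action away from $\mfrak{p}$. Second, localize at the maximal ideal $\mfrak{m}_\rho$ cut out by $\rho$: Conjecture \ref{Serretype} asserts precisely that the Hecke system attached to $\rho$, which occurs on the $D$-side by Conjecture \ref{BDJ}, also occurs on the $\bar{D}$-side, so that $\sA(\bar{D})[\mfrak{m}_\rho] \neq 0$. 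Picking a nonzero vector $v$ there yields a $G_F \times \GL_2(F_\mfrak{p}) \times \prod_{w \in \Sigma \setminus \Sigma'}(D \otimes_F F_w)^\times$-equivariant projection
\[
H^1_{\bar{\F}_p, ss}[\mfrak{m}_\rho] \longrightarrow \hat{H}^1_{\text{LT}, \bar{\F}_p}.
\]
Precomposing with the embedding of Theorem \ref{Mainthm} and choosing $v$ so that the composite is nonzero gives the desired arrow, which is automatically injective by the irreducibility of the three tensor factors $\rho$, $\pi_\mfrak{p}$, $\pi_w$ over their respective groups.

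The main obstacle, to my mind, is the precise bookkeeping of the group actions through the uniformization. The factors $(D \otimes_F F_w)^\times$ for $w \in \Sigma \setminus \Sigma'$ pass through the isomorphism unchanged because $D$ and $\bar{D}$ agree at those places; but the $\GL_2(F_\mfrak{p})$-action on the supersingular completed cohomology must be matched, via the Lubin-Tate tower, with the geometric $\GL_2(F_\mfrak{p})$-action on $\hat{H}^1_{\text{LT}, \bar{\F}_p}$ in such a way that the whole (conjectural) supersingular representation $\pi_\mfrak{p}$ is transported faithfully, not merely its Hecke-theoretic shadow. Carrying out this local-at-$\mfrak{p}$ compatibility in the mod $p$ setting, where the nearby-cycles machinery available in Carayol's $\ell$-adic setup is not as cleanly at hand, is the substantive step; once the equivariant isomorphism of completed cohomologies is in place, the conclusion follows formally from Theorem \ref{Mainthm} and the nonvanishing furnished by Conjecture \ref{Serretype}.
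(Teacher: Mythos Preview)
Your proposal is correct and follows essentially the same route as the paper: uniformize the supersingular locus via the Lubin--Tate tower and the auxiliary totally definite algebra $\bar{D}$, invoke Conjecture~\ref{Serretype} to know that $\mfrak{m}_\rho$ is in the support of the space $\mathbf{F}$ of mod $p$ automorphic forms on $\bar{D}$, and then feed in Theorem~\ref{Mainthm}. Two small points of calibration: the paper's uniformization isomorphism comes out as $\bar{D}^\times(F_\mfrak{p})$-\emph{invariants}, namely $H^1_{\bar{\F}_p,\mathrm{ss}} \simeq (\mathbf{F}\otimes_{\bar{\F}_p}\hat{H}^1_{\mathrm{LT},\bar{\F}_p})^{\bar{D}^\times(F_\mfrak{p})}$, not the coinvariants you wrote, so the passage to $\hat{H}^1_{\mathrm{LT},\bar{\F}_p}$ is via a nonzero \emph{functional} on $\sigma_{\mfrak{m}_\rho}:=\mathbf{F}[\mfrak{m}_\rho]^K$ rather than a vector; and the paper makes explicit the tame $K$-invariants step (with $K=K_{\Sigma'}K^\Sigma$ built from Theorem~\ref{vectors}) before localizing, which is what you gesture at with ``matching tame level''.
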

 \subsection*{Strategy of the proofs} The strategy for the proofs of above two theorems are summarised below:
 \begin{enumerate}
 \item For the proof of Theorem \ref{Mainthm} we first study the ordinary and supersingular part of Shimura curves and show that the cohomology of the ordinary part can be induced from Borel subgroup of  $\GL_2(F_\mfrak{p})$. This would be crucial because it will help us to show that completed cohomology of ordinary part of Shimura curve does not contain any supersingular representations since by definition the supersingular representations are not subquotient of the representations induced from Borel. We follow this by a thorough study of the mod $l$ local Langlands correspondence as given by Emerton-Helm \cite{EmertonHelm14} and Helm \cite{Helm13}. We prove a crucial Theorem \ref{vectors} which shows the existence of `new vectors' for such a correspondence. Using this we construct a tame level such that invariance under this tame level along with Conjecture \ref{BDJ} gives us an injection of the conjectural mod $p$ local Langlands correspondence in the completed cohomology of the supersingular part of the Shimura curve.
 \item For the proof of Theorem \ref{LT} we define the cohomology of the Lubin-Tate towers following Chojecki \cite[Section 6]{Cho15} and then we show that the cohomology of the supersingular part of Shimura curve injects into the cohomology of the Lubin-Tate towers. Using the above step we get an injection of the of conjectural mod $p$ local Langlands correspondence in the first cohomology of Lubin-Tate towers thereby completing the proof.
 \end{enumerate}






 \section{Acknowledgements}
The first author was partially supported by the SERB grant 
 and MTR/2017/000357. The second author thanks IISER Pune for the financial support provided during the preparation of the article. Both authors were partially supported by the SERB grant  CRG/2020/000223.

\section{Shimura curves}\label{shimura curves}
The aim of this section is to define Shimura curves with its ordinary and supersingular part with an emphasis on the decomposition of the ordinary part.
We follow Carayol \cite{Carayol86} in this section.
 Let $F$ be a totally real number field of finite degree $d>1$. Fix a finite prime $\mathfrak{p}$ of $F$ lying over a rational prime $p$ of $\mathbb{Q}$. Let $F_{\mfrak{p}}$ be the completion of $F$ at $\mfrak{p}$, $\mathcal{O}_{\mathfrak{p}}$ to its ring of integers and $k$ be its residue field with cardinality $q$. Let $D$ be the quaternion algebra over $F$ which splits at one real place $\tau$ and non-split at other real places.
 Further assume that $D$ is split at $\mathfrak{p}$, i.e. $D_{\mathfrak{p}}:= D\otimes_{F} F_{\mathfrak{p}} = M_2(F_{\mathfrak{p}})$ and let $G = {\rm{Res}}_{F/\Q}(D^\times)$.
  We are interested in Shimura curves corresponding to open compact subgroups of $G(\A_f)$, where $\mathbb{A}_f$ denotes the finite adeles of $F$. To separate out the $\mathfrak{p}$-part and prime to $\mathfrak{p}$-part (tame level) from an open compact subgroup $K$ of $G(\A_f)$, we define $K_{\mathfrak{p}}^0 = \GL_2(\mathcal{O}_\mathfrak{p})$ and $K_{\mathfrak{p}^n} = \lbrace g\in \GL_2(\mathcal{O}_{\mathfrak{p}}): g\equiv I_2 \text{ mod }\mathfrak{p}^n\rbrace$. 
 
Indexed by the compact open subgroups $K$ of $G(\mathbb{A}_f)$, we have a projective system of compact Riemann surfaces given by 
$$M_K(G)(\mathbb{C}):= G(\mathbb{Q})\setminus G(\mathbb{A}_f)\times (\mathbb{C}-\mathbb{R})/K.$$ 
Shimura has defined a canonical model $M_K(G)$ of $M_K(G)(\mathbb{C})$ over the field $F$. These $M_K(G)$ constitutes a projective system of complete smooth algebraic curves over $F$. 

Let $\Gamma$ be the restricted direct product of $(D\otimes F_v)^\times$ at all the finite places $v\neq  \mfrak{p}$. Let $K = K_{\mathfrak{p}^n}\times H$, for $H$ a compact open subgroup of $\Gamma$ and let $M_{n,H} := M_{K_{\mathfrak{p}^n}\times H}$. We are interested in the mod $p$ reduction of Shimura curves so it is necessary to work with its integral model.
 Carayol showed that for sufficiently small $H$, there exists an integral model on $\mathcal{O}_{(\mathfrak{p})}:= F\cap \mcal{O}_{\mfrak{p}}$ for $M_{0,H}$ which we again denote by $M_{0,H}$ and this model has a good reduction in $\mfrak{p}$. Carayol \cite{Carayol86} further constructs a scheme which we again denote by $M_{n,H}$ over $\mcal{O}_\mfrak{p}$ as ``the moduli space of full level $\mfrak{p}^n$ structures'' over $M_{0,H}$ and shows that it is an integral model of the Shimura curve $M_{n,H}$.
The $\mcal{O}_{\mfrak{p}}$-scheme $M_{n,H}$ lying above $M_{0,H}$ is regular, finite and flat \cite[p.200]{Carayol86}.
To define supersingular and ordinary part of $M_{n,H}$, we start with few more definitions and results about $M_{0,H}$. 

One knows that $M_{0,H}$ is not a solution to any moduli problem regarding abelian varieties, nevertheless Carayol defines \'etale $\mathcal{O}_{\mathfrak{p}}$-modules ${\bf{E}}_{1,H}$, ${\bf{E}}_{2,H}$, $\cdots$, ${\bf{E}}_{n,H}$ for each $H$. The objects ${\bf{E}}_{i,H}$ are finite flat group schemes of rank $q^{2i}$ and they play the part of the $\mathfrak{p}^n$-torsion points of the universal abelian variety had it been existed. Thus we can use these objects to define level structures at $\mfrak{p}$. Only finitely many ${\bf{E}}_{i,H}$ are defined for each $H$ but as $H$ becomes smaller and smaller, increasingly many of them are defined so that on the projective limit ${\pmb{M}}_0$ of the system $\lbrace{M}_{0,H}\rbrace$ we get a full divisible $\mcal{O}_{\mfrak{p}}$-module ${\bf{E}}_{\infty}$ (that is a $1$-dimensional $p$-divisible group with an action of $\mcal{O}_{\mfrak{p}}$). 

For every geometric point $x$ of the special fibre $ M_{0,H} \otimes ~k$, one can define a local divisible $\mathcal{O}_{\mathfrak{p}}$-module. One considers the \'etale covering $\pmb{M}_0 \to M_{0,H}$ and chooses a lift $y$ of $x$. The map then gives an isomorphism between the local ring at $x$ and local ring at $y$ and then we consider the pull back of the divisible $\mathcal{O}_{\mathfrak{p}}$-module $\bf{E}_\infty$ over ${\pmb{M}}_0$ via the morphism $y: \Spec\bar{k} \rightarrow {\pmb{M}}_0$. Any two choices of $y$ give rise to isomorphic divisible height two $\mcal{O}_{\mfrak{p}}$-module over $\bar{k}$, and we write ${\bf{E}}_{\infty}|_{x}$ for the result. By the classification given by Drinfeld in \cite{Drinfeld74} for the height $h$ divisible $\mcal{O}_{\mfrak{p}}$-module over $\bar{k}$, we have following two possibilities for ${\bf{E}}_{\infty}|_{x}$:
\begin{enumerate} 
\item ${\bf{E}}_{\infty}|_{x} = \Sigma_1 \times (F_{\mathfrak{p}}/\mathcal{O}_{\mathfrak{p}})$, we call $x$ to be ordinary in this case,
\item ${\bf{E}}_{\infty}|_{x} = \Sigma_2$, we call $x$ to be supersingular in this case,
\end{enumerate}
where $\Sigma_h$ is the unique (upto isomorphism) formal $\mathcal{O}_\mathfrak{p}$-module of height $h$.
Note that in the classical case (modular curves), one uses the same definition for ordinary and supersingular points on the curve \cite[\textsection V.3, Theorem 3.1, p144]{Silverman}. Carayol \cite{Carayol86} also proved that the set of supersingular points for $M_{0,H}$ is finite and non-empty.

By \cite[0.11]{Carayol86} denote $\mathscr{M}_{n,v(H)}$ (isomorphic to the spectrum of a finite abelian extension of $F$) to the finite $F$-scheme of the connected components of $M_{n,H}$ and it extends uniquely into a finite normal $\mcal{O}_{(\mfrak{p})}$-scheme $\pmb{\mathscr{M}}_{n,v(H)}$ (isomorphic to the spectrum of ring of $\mfrak{p}$-integers of the above extension) and the structure morphism  extends into a morphism $v: M_{n,H} \to \pmb{\mathscr{M}}_{n,v(H)}$ which is smooth outside supersingular points. We define the mod $p$ reduction of $M_{0,H}$ and $M_{n,H}$ as follows: Let $x: \Spec \bar{k} \to \pmb{\mathscr{M}}_{n,v(H)}$ be a point in the special fibre, then \cite[9.4.1]{Carayol86}
$$\bar{M}_{0,H} := M_{0,H} \times_{\pmb{\mathscr{M}}_{0,v(H)}} \Spec \bar{k}, \quad\quad\bar{M}_{n,H} := {M}_{n,H} \times_{\pmb{\mathscr{M}}_{n,v(H)}} \Spec \bar{k}.$$
Carayol showed that $\bar{k}$-scheme $\bar{M}_{n,H}$ is a proper, connected curve which is smooth outside the set of supersingular points. Let $\bar{M}_{n,H,\rm{ord}}$ be the curve obtained by removing the supersingular points from $\bar{M}_{n,H}$. So a $\bar{k}$-ordinary point of $\bar{M}_{n,H}$ is equivalent to 
\begin{itemize}
\item a $\bar{k}$-ordinary point of $\bar{M}_{0,H}$,
\item a surjective homomorphism $(\mfrak{p}^{-n}/\mcal{O}_{\mfrak{p}})^2 \to \mfrak{p}^{-n}/\mcal{O}_{\mfrak{p}} $ (the kernel is a direct factor of rank $1$ in the $\mcal{O}_{\mfrak{p}}/\mfrak{p}^n$-module $(\mfrak{p}^{-n}/\mcal{O}_{\mfrak{p}})^2$).
\end{itemize}

Let $a \in \rm{P}^1(\mcal{O}_\mfrak{p}/\mfrak{p}^n\mcal{O}_{\mfrak{p}})$ be a direct factor of rank $1$ in $\mcal{O}_{\mfrak{p}}/\mfrak{p}^n$-module $(\mfrak{p}^{-n}/\mcal{O}_{\mfrak{p}})^2$. Let $\bar{M}_{n,H,a}$ be the closed sub-scheme of $\bar{M}_{n,H}$ defined by the condition $\phi | a = 0$, where $\phi$ is the universal Drinfeld basis (\cite[Section 8.4]{Carayol86}) on $\bar{M}_{n,H}$. Define$\bar{M}_{n,H}^{\rm{ord}}$ to be the set of all ordinary points of $M_{n,H}$ and $\bar{M}_{n,H,a}^{\rm{ord}} := \bar{M}_{n,H, a} \cap \bar{M}_{n,H}^{\rm{ord}}$. Hence $\bar{M}_{n,H}^{\rm{ord}}$ is the disjoint union of $\bar{M}_{n,H,a}^{\rm{ord}}$, i.e.,
$$\bar{M}_{n,H}^{\rm{ord}} = \sqcup_{{a\in\rm{P}^1}(\mcal{O}_\mfrak{p}/\mfrak{p}^n\mcal{O})} \bar{M}_{n,H,a}^{\rm{ord}}.$$
The curves $\bar{M}_{n,H,a}$ is permuted by the action of $\GL_2(\mcal{O}_{\mfrak{p}})$ and hence it permutes $\bar{M}_{n,H,a}^{\rm{ord}}$, in fact the action of $\GL_2(\mcal{O}_{\mfrak{p}})$
factors through $\GL_2(\mcal{O}_{\mfrak{p}}/\mfrak{p}^n)$ (since $g \in \GL_2(\mcal{O}_{\mfrak{p}})$ takes $\bar{M}_{n,H,a}$ to $\bar{M}_{n,H,g\cdot a}$ for $a\in~\rm{P}^1(\mcal{O}_{\mfrak{p}}/\mfrak{p}^n) $).  

Let $M_{n,H}^{an}$ denote the analytification of $M_{n,H}$ which is a Berkovich space. Let $\pi : M_{n,H}^{an} \to \bar{M}_{n,H}$ be the reduction map. We define the ordinary and supersingular part of $M_{n,H}^{an}$ as 
$$M_{n,H}^{ \rm{ord}} = \pi^{-1}(\bar{M}_{n,H}^{\rm{ord}}),\quad\quad M_{n,H}^{\rm{ss}} = \pi^{-1}(\bar{M}_{n,H}^{\rm{ss}}),$$
where $\bar{M}_{n,H}^{\rm{ss}}$ is the complement of $\bar{M}_{n,H}^{\rm{ord}}$ in $\bar{M}_{n,H}^{an}$. Hence by the above discussion for sufficiently small $H$, we have a decomposition $M_{n,H}^{an} = M_{n,H}^{\rm{ord}}\sqcup M_{n,H}^{\rm{ss}}$.


\subsection{\'Etale sheaves and exact sequences}
Let $X$ be a scheme and $j : U \inj X$ be an open immersion. Let $Z = X \setminus U$ and $i : Z \to X$ be the inclusion map. Let $\mathcal{F}$ be an \'etale sheaf on $X$, we get a following exact sequence of sheaves on $X$
\begin{center}
$0\to j_!j^* \mathcal{F} \to \mathcal{F} \to i_*i^*\mathcal{F} \to 0,$
\end{center}
which gives the following long exact sequence of cohomology groups 
\begin{center}
$\cdots \to H^0(X, i_*i^*\mathcal{F}) \to H^1(X, j_!j^*\mathcal{F}) \to H^1(X, \mathcal{F}) \to H^1(X, i_*i^*\mathcal{F}) \to \cdots$
\end{center}
The cohomology with compact support is defined by $H^r_c(U,\mathcal{G}):= H^r(X, j_!\mathcal{G})$, for $r\geq 0$ and  any \'etale sheaf $\mathcal{G}$ on $U$. Also, by definition of $i^*$ we have $H^i(X, i_*i^*\mathcal{F}) = H^i(Z, i^*\mathcal{F})$ for $i =0$, $1$. Therefore we get 
\begin{center}
$\cdots \to H^0(Z, i^*\mathcal{F}) \to H^1_c(U, j^*\mathcal{F}) \to H^1(X, \mathcal{F}) \to H^1(Z, i^*\mathcal{F}) \to \cdots$
\end{center}
We will also consider the cohomology with support on $Z$. For this let $Z$ be a closed subvariety (or subscheme) of $X$. For any \'etale sheaf $\mathcal{F}$ on $X$ we have the following long exact sequence of cohomology groups 
\begin{center}
$\cdots \to H^r_Z(X, \mathcal{F}) \to H^r(X, \mathcal{F}) \to H^r(U, \mathcal{F}) \to H^{r+1}(X, \mathcal{F})\to \cdots $
\end{center}
By the general formalism of six operations for Berkovich spaces (see \cite{Berkovich93}) and by the comparison results of \'etale cohomology of schemes and its analytification (see \cite{Berkovich95}) we can use the above two long exact sequence (with compact support and with support on $Z$) in the case where $X = M_{n,H}^{an}$, $U = M_{n,H}^{\rm{ss}}$ and $Z = M_{n,H}^{ \rm{ord}}$, since $ M_{n,H}^{\rm{ss}}$ is an open analytic subspace of $M_{n,H}^{an}$ to get the following (later we will specialise the sheaf $\mcal{F}$ to $\bar{\F}_p$):
\begin{center}
\begin{align}
\cdots \to H^0(M_{n,H}^{ \rm{ord}}, i^*\mathcal{F}) \to H^1_c(M_{n,H}^{\rm{ss}}, j^*\mathcal{F}) \to H^1(M_{n,H}^{an}, \mathcal{F}) \to H^1(M_{n,H}^{ \rm{ord}}, i^*\mathcal{F}) \to \cdots, \label{compact support}\\
\cdots \to H^r_{M_{n,H}^{ \rm{ord}}}(M_{n,H}^{an}, \mathcal{F}) \to H^r(M_{n,H}^{an}, \mathcal{F}) \to H^r(M_{n,H}^{\rm{ss}}, \mathcal{F}) \to H^{r+1}_{M_{n,H}^{ \rm{ord}}}(M_{n,H}^{an}, \mathcal{F})\to \cdots \label{no compact support}
\end{align}
\end{center}

\subsection{Decomposition of the ordinary locus}
  The action of $g\in\GL_2(\mathcal{O}_\mathfrak{p}/\mathfrak{p}^n\mathcal{O})$ on $\bar{M}_{n,H}^{\rm{ord}}$ is such that it permutes the curves $\bar{M}_{n,H,a}^{\rm{ord}}$ for $a\in\mathbb{P}^1(\mathcal{O}_\mathfrak{p}/\mathfrak{p}^n\mathcal{O})$.
Let $b = \begin{pmatrix}
1\\
0
\end{pmatrix}
 \in  \mathbb{P}^1(\mathcal{O}_\mathfrak{p}/\mathfrak{p}^n\mathcal{O})$ and $B_{n}$ denote the Borel subgroup of upper triangular matrices in $\GL_2(\mathcal{O}_\mathfrak{p}/\mathfrak{p}^n\mathcal{O})$ which stabilizes $\bar{M}_{n,H,b}^{\rm{ord}}$. For $a\in \mathbb{P}^1(\mathcal{O}_\mathfrak{p}/\mathfrak{p}^n)$, define $M_{n,H,a}^{\rm{ord}} =\pi^{-1}(\bar{M}_{n,H,a}^{\rm{ord}})$.
 Hence $H^i(M_{n,H,b}^{\rm{ord}}, i^*F_{|M_{n,H,b}^{\rm{ord}}})$ is a representation of $B_n$. 
 
  By the above considerations we have for $i\geq 0$
 \begin{align}
 H^i(M_{n,H}^{\rm{ord}}, i^*F) &= \bigoplus_{a\in \mathbb{P}^1(\mathcal{O}_\mathfrak{p}/\mathfrak{p}^n\mathcal{O})}H^i(M_{n,H,{\rm{ord}}}^a, i^*F_{|M_{n,H,{\rm{ord}}}^a})\nonumber\\
 & \simeq \mathrm{Ind}_{B_{n}(\infty)}^{\GL_2(\mathcal{O}_\mathfrak{p}/\mathfrak{p}^n\mathcal{O})}H^i(M_{n,H,{\rm{ord}}}^b, i^*F_{|M_{n,H,{\rm{ord}}}^b})\label{decomposition1},
\end{align}
 and also 
 \begin{center}
\begin{align} 
 H^1_{M_{n,H}^{\rm{ord}}}(M_{n,H}, F) \simeq \mathrm{Ind}_{B_{n}(\infty)}^{\GL_2(\mathcal{O}_\mathfrak{p}/\mathfrak{p}^n\mathcal{O})}H^1_{M_{n,H,b}^{\rm{ ord}}}(M_{n,H}, F). \label{decomposition2}
 \end{align}
 \end{center}
 These results will be extremely helpful in the latter sections.

\section{mod $p$ completed cohomology of Shimura curves and admissibility results} \label{Completed cohomology}
In this section we define completed cohomology of Shimura curves which first appeared in \cite{Eme06}. In \cite{Emerton11}, Emerton showed that the completed cohomology of modular curves realizes both $p$-adic and mod $p$ local Langlands correspondence (\cite[Theorem 1.2.1, 1.2.6]{Emerton11}) and this is one of the reasons we are interested in completed cohomology.
The above mentioned results of Emerton in the mod $p$ setting (\cite[Theorem 1.2.6]{Emerton11}) is a special case of Buzzard-Diamond-Jarvis conjecture which we will state in the next section. We first define completed cohomology of Shimura curves. Note that we are implicitly using the comparison theorem for the \'etale cohomology of schemes and its analytification (for details see \cite{Berkovich95}) throughout the section.

Following the notation in Section \ref{shimura curves}, let  $M_{K_\mfrak{p}\times K^\mfrak{p}}$ be the Shimura curve $M_K(G)$ for the level $K= K_{\mfrak{p}}\times K^\mfrak{p}$. We assume that for a fixed rational prime $p$, there is only one prime $\mfrak{p}$ of $F$ lying above $p$.
 
 
 Define the completed cohomology for the Shimura curve $M_K$ as
$$H^m(M(K^\mfrak{p}), \bar{\F}_p):= \varinjlim\limits_{K_\mfrak{p}}H^m(M_{K_{\mfrak{p}}\times K^\mfrak{p}}^{an}, \bar{\F}_p),$$
it has a commuting smooth action of $G(\Q_p) = \GL_2(F_\mfrak{p})$ and a continuous action of $G_F$.

In Langlands correspondence ($p$-adic or mod $p$), one works with admissible representation on the automorphic side. It is desirable that the representation spaces we defined in Section \ref{Completed cohomology} are admissible. 
In this section we derive various results about the admissibility of the cohomology groups for Shimura curves. We begin by defining some basic notions from representation theory. Let $k$ be a field of characteristic $p$ and $G$ be a reductive group over $E$ (non-archimedean field of char $0$).
\begin{definition} A representation $V$ of $G$ over $k$ is said to be smooth at $x \in V$ if the Stab$(x) = \lbrace g\in G ~|~ g\cdot x =x\rbrace$ is open in $G$. The representation $V$ is said to be smooth if $V$ is smooth at all $x\in V$. Furthermore a smooth representation $V$ of $G$ is said to be admissible if for any compact set $K \subset G$, $V^K = \lbrace x\in V ~|~ g\cdot x = x ~\forall~ g\in K\rbrace$ ($K$-fixed vectors of $V$) is finite dimensional vector space over $k$. 
\end{definition}

The following proposition follows from \cite[Theorem 4.4.6]{Emerton10}.
\begin{prop} 
Let $V = {\rm{Ind}}_{P}^G W$ be a parabolic induction. If $V$ is admissible representation of $G$ over $k$, then $W$ is admissible representation of $P$ over $k$.
\end{prop}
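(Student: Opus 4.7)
The plan is to verify admissibility of $W$ as a $P$-representation by producing, for each compact open subgroup $K_P\subseteq P$, an embedding $W^{K_P}\inj V^K$ for a suitable compact open $K\subseteq G$ whose finite-dimensionality then forces finite-dimensionality of $W^{K_P}$. The smoothness of $W$ is automatic from its role as inducing datum in the smooth induction, so the only content is to bound dimensions of invariants.

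First I would exploit the fact that the parabolic $P$ is closed in $G$, so the subspace topology on $P$ coincides with its own $p$-adic topology. In particular the family $\{K\cap P : K\subseteq G\text{ compact open}\}$ forms a fundamental system of neighborhoods of the identity in $P$. Hence, given any compact open $K_P\subseteq P$, I can choose a compact open $K\subseteq G$ with $K\cap P\subseteq K_P$. This reduces the problem to proving $\dim_k W^{K\cap P}<\infty$, since $W^{K_P}\subseteq W^{K\cap P}$.

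Next I would unwind the definition $V=\mathrm{Ind}_P^G W$: its $K$-fixed vectors consist of smooth $f:G\to W$ with $f(pgk)=p\cdot f(g)$ for all $p\in P$, $g\in G$, $k\in K$. Iwasawa decomposition makes $P\backslash G/K$ finite, and evaluation at a coset representative $g$ identifies the corresponding summand with $W^{gKg^{-1}\cap P}$. In particular, taking the coset of the identity, the assignment
\begin{equation*}
w\longmapsto f_w,\qquad f_w(pk)=p\cdot w\text{ for }p\in P,\ k\in K,\quad f_w\equiv 0\text{ off }PK,
\end{equation*}
is well-defined precisely because $w$ is fixed by $K\cap P$, and yields a $k$-linear injection $W^{K\cap P}\inj V^K$ with left inverse $f\mapsto f(1)$.

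Finally, admissibility of $V$ gives $\dim_k V^K<\infty$, hence $\dim_k W^{K\cap P}<\infty$ and therefore $\dim_k W^{K_P}<\infty$, which is the desired admissibility of $W$. I do not foresee a genuine obstacle here: the argument is essentially the elementary half of Frobenius reciprocity together with finiteness of $P\backslash G/K$, and the reference to Emerton's theorem is used mostly to fix conventions (in particular to confirm that the same proof works uniformly over a coefficient field $k$ of characteristic $p$, where one must be careful that admissibility is phrased in terms of finite-dimensional invariants rather than of coinvariants or duals).
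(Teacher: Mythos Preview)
Your argument is correct. The paper does not supply its own proof but simply defers to \cite[Theorem 4.4.6]{Emerton10}; your direct embedding $W^{K\cap P}\hookrightarrow V^K$, obtained by extending by zero off the open double coset $PK$, is a clean self-contained route and works uniformly over any coefficient field $k$. One small remark: the finiteness of $P\backslash G/K$ via Iwasawa that you invoke is never actually used---it would be relevant for the converse implication (admissibility of $W$ implies admissibility of $V$), but for the direction at hand only the identity coset and the single injection $w\mapsto f_w$ are needed.
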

We will apply the next lemma to show that a supersingular representation does not appear as a sub-quotient in the cohomology of the ordinary locus of the Shimura curves.
\begin{lemma} \label{unipotent}
For any admissible representation $(\pi, V)$ of the parabolic subgroup $P\subset G$ over $k$, the unipotent radical $U$ of $P$ acts trivially on $V$.
\end{lemma}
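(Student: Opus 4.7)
The plan is to combine smoothness, admissibility, and a contracting element from the centre of the Levi of $P$ to iteratively enlarge the unipotent subgroup fixing each vector until it exhausts all of $U$.

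By smoothness, $V = \bigcup V^{K_0}$ as $K_0$ ranges over compact open subgroups of $P$, so it suffices to show $V^{K_0} \subset V^U$ for each such $K_0$. First I would fix a presentation $K_0 = M_0 U_0$ with $M_0 \subset M$ compact open and normalising $U_0 = K_0 \cap U$, and then select a strictly contracting element $t$ in the centre of the Levi $M$, meaning $tU_0 t^{-1} \subsetneq U_0$ together with the exhaustion $\bigcup_{n \ge 0} t^{-n} U_0 t^n = U$; the existence of such $t$ is standard from the structure theory of reductive $p$-adic groups.

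The key observation is that since $t$ centralises $M_0$, one has $t^{-1} K_0 t = M_0 \cdot (t^{-1} U_0 t) \supsetneq K_0$, and hence $\pi(t^{-1}) V^{K_0} = V^{t^{-1} K_0 t} \subset V^{K_0}$, so $\pi(t^{-1})$ restricts to an endomorphism of $V^{K_0}$. Admissibility enters here: $V^{K_0}$ is finite dimensional over $k$. Since $\pi(t^{-1})$ is already invertible on the ambient space $V$, its restriction to $V^{K_0}$ is injective, hence surjective by finite-dimensionality. This forces $V^{t^{-1} K_0 t} = V^{K_0}$, and unravelling the intersection decompositions $V^{K_0} = V^{M_0} \cap V^{U_0}$ and $V^{t^{-1} K_0 t} = V^{M_0} \cap V^{t^{-1} U_0 t}$ yields $V^{K_0} \subset V^{t^{-1} U_0 t}$.

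Iterating the same reasoning with $U_0$ replaced at each step by $t^{-n} U_0 t^n$ (still strictly contracted by $t$, and still normalised by $M_0$ because $t \in Z(M)$) gives $V^{K_0} \subset V^{t^{-n} U_0 t^n}$ for every $n \ge 0$; intersecting over $n$ and invoking $\bigcup_n t^{-n} U_0 t^n = U$ produces $V^{K_0} \subset V^U$, which finishes the argument. The main point to watch is the existence of a contracting element $t \in Z(M)$ together with the exhaustion property for $U_0$, and the compatibility of the iteration with the fixed Levi piece $M_0$; both are elementary consequences of the structure of parabolic subgroups in a reductive $p$-adic group, and in the case $G = \GL_2$, $P = B$ relevant to this paper, one can simply take $t$ to be any strictly dominant diagonal matrix such as $\mathrm{diag}(\varpi^N,1)$ with $N \ge 1$.
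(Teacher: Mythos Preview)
Your argument is correct and is the standard proof of this fact via a strictly contracting central element of the Levi; the only small point to add is that not every compact open $K_0\subset P$ has an Iwahori-type factorisation $K_0=M_0U_0$, but those that do are cofinal, which is all you need. The paper itself gives no argument at all and simply refers to \cite[Lemma~3.3]{Cho15}, so there is nothing further to compare.
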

\begin{proof}
This is \cite[Lemma 3.3]{Cho15}.
\end{proof}
We derive the admissibility results for the cohomology groups defined in Section \ref{Completed cohomology}.

\begin{prop} \label{1}
The $\GL_2(F_\mfrak{p})$-representation $H^1(M(K^\mfrak{p}), \bar{\F}_p)$ is admissible.
\end{prop}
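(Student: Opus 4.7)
The goal is to show that $V := H^1(M(K^\mathfrak{p}), \bar{\F}_p)$ is admissible, i.e.\ that $V^{K_\mathfrak{p}'}$ is finite-dimensional over $\bar{\F}_p$ for every compact open subgroup $K_\mathfrak{p}' \subset \GL_2(F_\mathfrak{p})$. The plan is to mimic, in the Shimura-curve setting, the argument Emerton uses for modular curves in \cite{Emerton11}.

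First I would fix a cofinal decreasing sequence $\{K_\mathfrak{p}^{(n)}\}_{n\ge 0}$ of normal open subgroups of $K_\mathfrak{p}'$ with trivial intersection, taken small enough that each level $K_\mathfrak{p}^{(n)} K^\mathfrak{p}$ is neat. Since a filtered colimit of smooth representations commutes with taking invariants under an open subgroup, one has
$$V^{K_\mathfrak{p}'} \;=\; \varinjlim_n H^1\!\left(M_{K_\mathfrak{p}^{(n)} K^\mathfrak{p}}^{an},\, \bar{\F}_p\right)^{G_n}, \qquad G_n := K_\mathfrak{p}'/K_\mathfrak{p}^{(n)}.$$
This reduces admissibility to a uniform (in $n$) bound on the dimension of each term of the colimit.

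Next, for $n\gg 0$, the covering $M_{K_\mathfrak{p}^{(n)} K^\mathfrak{p}} \to M_{K_\mathfrak{p}' K^\mathfrak{p}}$ is a finite \'etale Galois map of smooth proper curves over $F$ with group $G_n$. The Hochschild--Serre five-term exact sequence with $\bar{\F}_p$-coefficients gives an inclusion
$$H^1\!\left(M_{K_\mathfrak{p}^{(n)} K^\mathfrak{p}}, \bar{\F}_p\right)^{G_n} \;\hookrightarrow\; \operatorname{coker}\!\Bigl(H^1(G_n, H^0_n) \to H^1(M_{K_\mathfrak{p}' K^\mathfrak{p}}, \bar{\F}_p)\Bigr)\; \oplus\; H^2\!\left(G_n,\, H^0_n\right),$$
where $H^0_n := H^0(M_{K_\mathfrak{p}^{(n)} K^\mathfrak{p}}, \bar{\F}_p)$. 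Since $M_{K_\mathfrak{p}' K^\mathfrak{p}}$ is a smooth proper curve over $F$, its first \'etale cohomology with $\bar{\F}_p$-coefficients is finite-dimensional (bounded by $2g$), and this bound is independent of $n$.

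It remains to control the correction term $H^2(G_n, H^0_n)$ uniformly in $n$. This is where I expect the main obstacle: the number of geometric connected components of $M_{K_\mathfrak{p}^{(n)} K^\mathfrak{p}}$ over $\bar{F}$ (hence $\dim H^0_n$) can grow with $n$. The plan is to identify $\pi_0$ via Deligne's description in terms of a finite abelian quotient $F^\times \backslash \widehat{F}^\times / \operatorname{nrd}(K_\mathfrak{p}^{(n)} K^\mathfrak{p})$ (or the analogous quaternionic version used by Carayol), which exhibits $H^0_n$ as induced from a one-dimensional character of a finite-index subgroup. Passing to the colimit then gives
$$\varinjlim_n H^2(G_n, H^0_n) \;\cong\; H^2_{\mathrm{cts}}\!\left(K_\mathfrak{p}',\, \varinjlim_n H^0_n\right),$$
and since $K_\mathfrak{p}'$ is a compact $p$-adic analytic group, Lazard's finiteness theorem ensures that continuous mod-$p$ cohomology in each degree with coefficients in such a smooth representation is finite-dimensional.

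Combining the two bounds, each $H^1(M_{K_\mathfrak{p}^{(n)} K^\mathfrak{p}}, \bar{\F}_p)^{G_n}$ has dimension bounded uniformly in $n$, so the colimit $V^{K_\mathfrak{p}'}$ is finite-dimensional, proving admissibility. The genuinely delicate point is the step controlling $H^2(G_n, H^0_n)$, since naive termwise bounds need not be uniform; here the use of the compact $p$-adic Lie structure on $K_\mathfrak{p}'$ (and therefore Lazard) is essential and cannot be replaced by a purely finite-group argument.
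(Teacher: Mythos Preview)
The paper does not give its own proof: it simply cites \cite[Theorem~2.1.5]{Eme06}. So there is no argument to compare against directly; what you have written is essentially a sketch of Emerton's original proof, and the overall architecture (Hochschild--Serre at finite level, then pass to the colimit and control the $H^2$ correction term using the $p$-adic Lie structure of $K_\mathfrak{p}'$) is correct.

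Two places where your wording needs tightening. First, the step you attribute to ``Lazard's finiteness theorem'' is not quite Lazard's statement: Lazard gives finiteness of $H^i_{\mathrm{cts}}(K_\mathfrak{p}',\bar{\F}_p)$ with \emph{trivial} coefficients, whereas here $\varinjlim_n H^0_n$ is an infinite-dimensional module. The relevant input is rather that $\varinjlim_n H^0_n$ is \emph{admissible} (which you have to check, using the explicit description of $\pi_0$ you allude to), hence its Pontryagin dual is finitely generated over the Iwasawa algebra $\bar{\F}_p[[K_\mathfrak{p}']]$; Noetherianity of this algebra (this is where Lazard enters) then gives finiteness of $H^i_{\mathrm{cts}}(K_\mathfrak{p}',\varinjlim_n H^0_n)$ for every $i$. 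Second, your concluding sentence claims a \emph{uniform} bound on $\dim H^1(M_{K_\mathfrak{p}^{(n)}K^\mathfrak{p}},\bar{\F}_p)^{G_n}$, but your argument does not give that and does not need it: after taking the colimit of the five-term exact sequences you directly bound $\dim V^{K_\mathfrak{p}'}$ by $\dim H^1(M_{K_\mathfrak{p}'K^\mathfrak{p}},\bar{\F}_p)+\dim H^2_{\mathrm{cts}}(K_\mathfrak{p}',\varinjlim_n H^0_n)$, which is all that is required.
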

\begin{proof}
This is \cite[Theorem 2.1.5]{Eme06}.
\end{proof}

\begin{prop} The $\GL_2(F_\mfrak{p})$-representation $H^1_{M(K^\mfrak{p})^\emph{ord}}(M(K^\mfrak{p}), \bar{\F}_p) := \varinjlim_{K_\mfrak{p}} H^1_{M_{n,H}^\emph{ord}}(M^{an}(K_\mfrak{p}K^\mfrak{p}),\bar{\F}_p)$ is admissible.
\end{prop}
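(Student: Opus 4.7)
The plan is to sandwich the representation in question between admissible ones via the long exact sequence \eqref{no compact support}, and conclude by the fact that admissible smooth $\GL_2(F_\mfrak{p})$-representations form a Serre subcategory of smooth representations.

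First I would apply \eqref{no compact support} with $\mathcal{F} = \bar{\F}_p$, $X = M_{n,H}^{an}$, $Z = M_{n,H}^{\rm{ord}}$, $U = M_{n,H}^{\rm{ss}}$, and pass to the filtered colimit over $K_\mfrak{p} = K_{\mfrak{p}^n}$. Since filtered colimits are exact, this yields an exact sequence
$$H^0(M(K^\mfrak{p}), \bar{\F}_p) \longrightarrow \varinjlim_{K_\mfrak{p}} H^0(M_{n,H}^{\rm{ss}}, \bar{\F}_p) \longrightarrow H^1_{M(K^\mfrak{p})^{\rm{ord}}}(M(K^\mfrak{p}), \bar{\F}_p) \longrightarrow H^1(M(K^\mfrak{p}), \bar{\F}_p)$$
of smooth $\GL_2(F_\mfrak{p})$-representations. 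This exhibits $H^1_{M(K^\mfrak{p})^{\rm{ord}}}(M(K^\mfrak{p}), \bar{\F}_p)$ as an extension of a subrepresentation of $H^1(M(K^\mfrak{p}), \bar{\F}_p)$ by a quotient of $\varinjlim_{K_\mfrak{p}} H^0(M_{n,H}^{\rm{ss}}, \bar{\F}_p)$.

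Next I would verify admissibility of the two outer pieces. The representation $H^1(M(K^\mfrak{p}), \bar{\F}_p)$ is admissible by Proposition~\ref{1}, and the Serre subcategory property makes its subrepresentation admissible as well. For $\varinjlim_{K_\mfrak{p}} H^0(M_{n,H}^{\rm{ss}}, \bar{\F}_p)$ I would argue that, at each finite level, $M_{n,H}^{\rm{ss}}$ is a finite disjoint union of tubes above the finitely many supersingular points of $\bar{M}_{n,H}$ (finiteness by Carayol), and each tube is connected, being the Berkovich generic fibre of a Lubin-Tate deformation space of the connected formal $\mathcal{O}_\mfrak{p}$-module $\Sigma_2$. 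Consequently $H^0(M_{n,H}^{\rm{ss}}, \bar{\F}_p)$ is finite-dimensional, so the colimit has finite-dimensional $K_{\mfrak{p}^n}$-invariants for every $n$ and is thus admissible.

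Combining these inputs with closure of admissibility under extensions then gives the desired conclusion. The main obstacle I anticipate is the geometric input that the supersingular tubes at Drinfeld level $n$ remain connected, equivalently that $H^0(M_{n,H}^{\rm{ss}}, \bar{\F}_p)$ is finite-dimensional; this step depends on the Carayol-Drinfeld description of the formal structure at supersingular points and the geometry of Lubin-Tate spaces, rather than being a formal consequence of the six-functor formalism used to write down \eqref{no compact support}.
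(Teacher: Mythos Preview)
The paper gives no independent proof here; it simply cites \cite[Proposition 3.9]{Cho15}. Your argument is essentially the one Chojecki uses in the modular-curve setting, so in that sense you have reconstructed the intended proof rather than found an alternative.

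Your exact-sequence sandwich is correct, and the Serre-subcategory reasoning is valid (this is \cite[Proposition 2.2.13]{Emerton10}). One small sharpening: for the finite-dimensionality of $H^0(M_{n,H}^{\rm{ss}},\bar{\F}_p)$ you do not need to invoke connectedness of Lubin--Tate spaces with level structure as a separate geometric input. Carayol proves that $M_{n,H}$ is \emph{regular}, so the completed local ring at each of the finitely many supersingular points of $\bar{M}_{n,H}$ is a regular complete local $\widehat{\mathcal{O}}_{\mfrak{p}}^{nr}$-algebra, in particular a domain; the Berkovich generic fibre of the formal spectrum of such a ring is automatically connected. Combined with the finiteness of the supersingular locus in $\bar{M}_{n,H}$ (finite over the finite supersingular locus of $\bar{M}_{0,H}$), this gives the desired finite-dimensionality without further appeal to the structure of Lubin--Tate towers.
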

\begin{proof}
This is \cite[Proposition 3.9]{Cho15}.
\end{proof}

By \eqref{decomposition2}, the $\GL_2(F_\mfrak{p})$-representation $H^1_{M(K^\mfrak{p})^{\rm{ord}}}(M(K^\mfrak{p}), \bar{\F}_p)$ is isomorphic to the induced representation $$\mathrm{Ind}_{B_{\infty}(F_\mfrak{p})}^{\GL_2(F_\mfrak{p})}H^1_{M_{n,H,b}^{\rm{ ord}}}(M_{n,H}, \bar{\F}_p),$$
where $B_\infty(F_\mfrak{p})$ is the Borel subgroup of upper triangular matrices in $\GL_2(F_\mfrak{p})$. On this representation unipotent group acts trivially by Lemma \ref{unipotent} and hence it is induced from the tensor product of characters. This implies that $H^1_{M(K^\mfrak{p})^\emph{ord}}(M(K^\mfrak{p}), \bar{\F}_p)$ does not have any subquotient which is isomorphic to a supersingular representation. For future purpose we also define
\begin{align}
H^1(M(K^\mfrak{p})^{\text{ss}}, \bar{\F}_p):= \varinjlim_{K_{\mfrak{p}}}H^1(M_{n,K^\mfrak{p}}^{\text{ss}}, \bar{\F}_p),\quad H^1_{\bar{\F}_p, \text{ss}} :=\varinjlim_{K^\mfrak{p}}H^1(M(K^\mfrak{p})^{\text{ss}}, \bar{\F}_p). \label{supersingular cohomology}
\end{align}

\section{Buzzard-Diamond-Jarvis Conjecture and Modified mod $p$ LLC }\label{new vectors}
\subsection*{Buzzard-Diamond-Jarvis conjecture}
Let $D$ be a quaternion algebra over $F$ as in Section \ref{shimura curves}, i.e, D splits exactly at one infinite place. Let $U$ be an open compact subgroup of  $D_f^\times := (D\otimes \mathbb{A}_f)^\times$. In \cite{BDJ10}, the authors define $S^D(U) := H^1_{\acute{e}t}(Y_{U,\bar{F}}, \bar{\F}_p)$ ($Y_{U\bar{F}}$ is the Shimura curve corresponding to the compact open subgroup $U$) and $S^D := \varinjlim\limits_{U}S^D(U)$, where the limit is taken over all compact open subgroups of $D_f^\times$. 
With the notation introduced above $S^D =\varinjlim_{K^\mfrak{p}}H^1(M(K^\mfrak{p}), \bar{\F}_p)$. 
 Let $\rho: G_F \rightarrow \GL_2(\bar{\F}_p)$ be an irreducible continuous totally odd representation. Assume $\rho$ is modular and let $\Sigma$ be the finite set of finite places $w$ of $F$ such that $w$ divides $p$ or $D$ is ramified at $w$ or $\rho$ is ramified at $w$ or $\GL_2(\mathcal{O}_{F,w}) \nsubseteq U$, where $\mcal{O}_{F,w}$ denotes the completion of ring of integers $\mcal{O}_F$ of $F$ at the finite place $w$. For finite places $v$ of $F$ at which $D$ splits and $\GL_2(\mcal{O}_{F,v}) \subset U$ define the Hecke operators $T_v = \big[\GL_2(\mathcal{O}_{F,v})\begin{pmatrix} \varpi & 0\\
0&1
\end{pmatrix} \GL_2(\mathcal{O}_{F,v})\big]$ and $S_v = \big[ \GL_2(\mathcal{O}_{F,v})\begin{pmatrix} \varpi & 0\\
0&\varpi
\end{pmatrix} \GL_2(\mathcal{O}_{F,v})\big]$ which are elements of $\End_{\bar{\F}_p} (S^D(U))$ and where $\varphi$ denotes a uniformizer of $\mcal{O}_{F,v}$. Let ${\bf{T}}^\Sigma (U)$ denote the commutative  $\mathbb{\bar{F}}_p$-subalgebra of $\End_{\bar{\F}_p}(S^D(U))$ generated by $T_v$ and $S_v$ for all $v \not\in \Sigma$. Let $\mathfrak{m}^\Sigma_\rho$ be the maximal ideal of ${\bf{T}}^\Sigma (U)$ corresponding to $\rho$, i.e., generated by the operators $$T_v -S_v\text{tr}(\rho(\text{Frob}_v)) \quad\text{ and }\quad {\bf{N}}(v)-S_v \det(\rho(\text{Frob}_v))$$ 
for all $v\not\in\Sigma$.
Let $S^D(U)[\mathfrak{m}^\Sigma_\rho] = \lbrace f\in S^D(U) | \quad Tf = 0 \quad\forall \quad  T \in \mathfrak{m}^\Sigma_\rho\rbrace$. By Lemma 4.6 of \cite{BDJ10}, $S^D(U)[\mathfrak{m}^\Sigma_\rho]$ is independent of $\Sigma$, so we denote it by $S^D(U)[\mathfrak{m}_\rho]$. Also, by taking the limit over all compact open set $U$ of $D^\times$, $S^D[\mathfrak{m}_\rho] = \varinjlim\limits_{U}S^D(U)[\mathfrak{m}_\rho]$ becomes a representation of $D_f^\times$.
Now we state the Buzzard-Diamond-Jarvis conjecture, see \cite[Conjecture 4.9]{BDJ10}:

\begin{conj} \label{BDJ}Let $F$ be a totally real field and let 
$$\rho: G_F:=\Gal(\mathbb{\bar{Q}}/F)\rightarrow \GL_2(\bar{\mathbb{F}}_p)$$ be a continuous, irreducible and totally odd representation. Then the $\bar{\mathbb{F}}_p$ representation $S^D[\mathfrak{m}_\rho]$ of $G_F \times D_f^\times$ is isomorphic to a restricted tensor product $$ S^D[\mathfrak{m}_\rho]\cong  \rho \otimes ~(\otimes'_w \pi_w)$$ where $\pi_w$ is a smooth admissible representation of $D^\times_w$ such that 
\begin{itemize}
\item if $w$ does not divide $p$, then $\pi_w$ is the representation attached to $\rho_w := \rho|_{F_w}$ by the modulo $\ell$- local Langlands correspondence or Jacquet-Langlands correspondence (for the definition of $\pi_w$ in various cases see \cite[Section 4]{BDJ10}).
\item if $w$ divides $p$, then $\pi_w \neq 0$; moreover if both $F$ and $D$ are unramified at $w$ and $\sigma$ is any irreducible representation of $\GL_2(\mathcal{O}_{F_w})$, then $\Hom_{\GL_2(\mathcal{O}_{F_w})}(\sigma, \pi_w) \neq 0$ if and only if $\sigma \in W(\rho_w)$, where $W(\rho_w)$ is a certain set of Serre weights associated to $\rho_w$ (see \cite[Section 3]{BDJ10}).
\end{itemize}
\end{conj}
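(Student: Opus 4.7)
The plan is to break the conjecture into three tasks: (i) pin down the $G_F$-action on $S^D[\mathfrak{m}_\rho]$, (ii) establish the restricted tensor product factorisation $\otimes'_w \pi_w$ of the $D_f^\times$-action, and (iii) identify each local factor $\pi_w$ with the prescribed representation. Task (i) is essentially built into the definition of $\mathfrak{m}_\rho$: any Hecke eigensystem modulo $p$ contributing to $S^D[\mathfrak{m}_\rho]$ gives rise to a semisimple two-dimensional $G_F$-representation isomorphic to $\rho$, so $G_F$ acts on $S^D[\mathfrak{m}_\rho]$ through $\rho$ with multiplicity governed by the $D_f^\times$-piece.

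Task (ii) I would handle by combining strong multiplicity one for $D^\times$ with unramified local-global compatibility at primes $v\notin\Sigma$: the Hecke operators $T_v, S_v$ cut out the unramified principal series attached to $\rho_v$, and a standard Jacquet-module and new-vector argument promotes this into a genuine factorisation into local pieces $\pi_w$ at every finite place. A similar abstract decomposition argument appears in the proof of Theorem~\ref{Mainthm}, where the analogous factorisation is deduced under the same hypothesis.

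Task (iii) is the heart of the matter. At primes $w\nmid p$ I would invoke the mod-$\ell$ local-global compatibility of Emerton--Helm (and its Jacquet--Langlands transfer when $D$ ramifies at $w$); these identify $\pi_w$ with the $D_w^\times$-representation attached to $\rho_w$ by mod-$\ell$ local Langlands, exactly as the conjecture demands. At primes $w\mid p$ only the $\GL_2(\mathcal{O}_{F_w})$-socle of $\pi_w$ is predicted, so the task reduces to the weight part of Serre's conjecture: $\mathrm{Hom}_{\GL_2(\mathcal{O}_{F_w})}(\sigma,\pi_w)\neq 0$ precisely for $\sigma\in W(\rho_w)$. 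I would attack this via Taylor--Wiles patching together with the geometry of local crystalline deformation rings in the style of Gee and collaborators; nonvanishing of $\pi_w$ itself follows once modularity of $\rho$ is granted.

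The main obstacle is the weight part at primes above $p$: establishing the precise match between weights appearing in $\pi_w$ and those predicted by $\rho_w$ requires a modularity lifting theorem in substantial generality, together with a detailed understanding of integral $p$-adic Hodge theory at possibly ramified primes. Compounded with the fact that the structure of $\pi_w$ beyond its socle is governed by the still mysterious supersingular representations of $\GL_2(F_w)$ when $F_w\neq \mathbb{Q}_p$ (and by the nontrivial extensions between principal series and supersingulars shown by Hu), this is precisely why Conjecture~\ref{BDJ} is taken as a hypothesis in the present paper rather than proved outright.
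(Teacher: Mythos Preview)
The statement you are addressing is Conjecture~\ref{BDJ}, the Buzzard--Diamond--Jarvis conjecture. The paper does not prove it: it is stated as a conjecture and used as a standing hypothesis in Theorems~\ref{Mainthm} and~\ref{LT}. There is therefore no proof in the paper against which to compare your proposal, and your own final paragraph already acknowledges this.

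That said, your outline has a genuine gap beyond the obvious incompleteness at $w\mid p$. In task~(ii) you appeal to ``strong multiplicity one for $D^\times$'' and to ``a similar abstract decomposition argument'' in the proof of Theorem~\ref{Mainthm}. Neither is available. Strong multiplicity one is a statement about automorphic representations in characteristic zero; in the mod~$p$ setting the very existence of a restricted tensor product factorisation of $S^D[\mathfrak m_\rho]$ is part of what Conjecture~\ref{BDJ} asserts, not something one can import. And the proof of Theorem~\ref{Mainthm} does not deduce such a factorisation independently: it \emph{assumes} Conjecture~\ref{BDJ} and then uses the resulting decomposition $\rho\otimes(\otimes'_w\pi_w)$ as input. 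Citing it here is circular. Task~(i) is also not as automatic as you suggest: that $G_F$ acts on $S^D[\mathfrak m_\rho]$ through copies of $\rho$ (rather than merely having the same Frobenius traces outside $\Sigma$) requires an argument, typically via Eichler--Shimura together with irreducibility of $\rho$.
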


Let $p$ and $l$ be different odd primes. We recall the modified mod $p$ local Langlands correspondence for $\GL_2(E)$ given by Emerton and Helm in \cite[\textsection{5.2}]{EmertonHelm14} where
 $E$ is a finite extension of $\Q_l$. Let $\Ou_E$ be its ring of integers, $\gl$ be its maximal ideal, $k$ be its residue field and $q$ be the cardinality of $k$. Fix an algebraic closure $\bar{\F}_p$ of $\F_p$. Let $G_E := \Gal(\bar{\mathbb{Q}}_l / E)$ and $\rho : G_E \rightarrow \GL_2(\bar{\F}_p)$ be a continuous Galois representation. 
Let $\rho^{\rm{ss}}$ be the semisimplification of $\rho$ (so $\rho^{\rm{ss}}$ is either irreducible or reducible split). Let $\omega$ be the mod $p$ cyclotomic character of $G_E$.

We now write down  automorphic representation of $\GL_2(E)$ corresponding to $\rho$. Recall some facts about the extensions between mod $p$ representations of $\GL_2(E)$. Denote by $\pi_1$ cuspidal representation of $\GL_2(E)$ as defined by Vigneras \cite[Theoreme $3.c$]{Vigneras89}:
\begin{enumerate}
\item Up to isomorphism there is a unique non-split extension of $1$ by $\pi_1$.
\item Up to isomorphism there is a unique non-split extension of $\bar{\mid ~\mid}\circ \det$ by $\pi_1$.
\item Let $\rm{env}(\pi_1)$ be the unique extension of $1 \oplus (\bar{\mid ~\mid}\circ \det)$ by $\pi_1$ containing both the above non-split extensions as submodules.
\end{enumerate}

We recall the explicit description given my Emerton- Helm \cite[Proposition 5.2.1]{EmertonHelm14}. We denote the image of $\rho$ under the modified mod $p$ local Langlands correspondence by $\pi(\rho)$.

 We have the following possibilities:
\begin{itemize}
\item If $\rho^{\rm{ss}}$ is not a twist of $1\oplus \omega$, then $\pi(\rho)$ is uniquely determined by its supercuspidal support, and $\pi(\rho)$ is an irreducible representation of $\GL_2(E)$ hence generic (infinite dimension \cite[p. 657]{EmertonHelm14}).
\item If $\rho^{\rm{ss}}$ is a twist of $1\oplus \omega$, then we can assume that $\rho = 1 \oplus \omega$ (as LLC is compatible with twists). In the Banal case $(q \not\equiv \pm 1 ~\rm{ mod }~ p)$, we get
\begin{enumerate}
\item If $\rho$ is non-split, then $\pi(\rho) = \rm{St} \otimes (\bar{\mid~\mid} \circ \det)$.
\item If $\rho$ is split, then $\pi(\rho)$ is given by the unique non-split exact sequence 
$$0\rightarrow \rm{St}\otimes (\bar{\mid~\mid}\circ \det)\rightarrow \pi(\rho) \rightarrow (\bar{\mid~\mid}\circ \det)\rightarrow 0.$$
\end{enumerate}
\item Let $\rho^{\rm{ss}} = 1 \oplus \omega$ and $q\equiv -1$ mod $p$. We have following three possibilities.
\begin{enumerate}
\item if $\rho$ is split, then $\pi(\rho) = \rm{env}(\pi_1)$.
\item If $\rho$ is the non-split extension of $\omega$ by $1$, then $\pi(\rho)$ is given by the unique non-split extension 
$$0\rightarrow \pi_1 \rightarrow \pi(\rho) \rightarrow \bar{\mid ~\mid}\circ \det \rightarrow 0.$$
\item If $\rho$ is the non-split extension of $1$ by $\omega$, then $\pi(\rho)$ is given by the unique non split extension
$$0\rightarrow \pi_1 \rightarrow \pi(\rho) \rightarrow 1 \rightarrow 0.$$
\end{enumerate}
\item Let $\rho^{\rm{ss}} = 1 \oplus 1$ and $q \equiv 1$ mod $p$ (hence $\omega =1$), we have following two possibilities of $\pi(\rho)$.
\begin{enumerate}
\item If $\rho$ is split, then $\pi(\rho)$ is isomorphic to the universal extension of $1$ by $\rm{St}$ (and thus has length three).
\item If $\rho$ is non-split, then $\pi(\rho)$ corresponds to the non-split extension of $1$ by $\rm{St}$. 
\end{enumerate}
\end{itemize}

The following theorem is a generalisation of \cite[Theorem 5.2]{Cho15} to any local field $E $. Here we also include the case which was missing when $E = \Q_p$.
\begin{theorem}\label{vectors}
Let $\pi = \pi(\rho)$ be the mod $p$ admissible representation of $\GL_2(E)$ associated by the modified mod $p$ local Langlands correspondence given by Emerton-Helm \cite{EmertonHelm14} to a continuous Galois representation $\rho : G_E \rightarrow \GL_2(\bar{\mathbb{F}}_p)$. Then there exists an open compact subgroup $K$ of $\GL_2(\mathcal{O}_E) \subset \GL_2(E)$ such that $\dim_{\mathbb{\bar{F}}_p}\pi^K = 1$.
\end{theorem}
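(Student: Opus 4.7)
The plan is a case analysis over the explicit description of $\pi(\rho)$ just recalled, generalising Chojecki's \cite[Theorem 5.2]{Cho15} from $E = \Q_p$ to arbitrary finite extensions of $\Q_l$ with $l \neq p$.

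In the first case, when $\rho^{\rm ss}$ is not a twist of $1 \oplus \omega$, the representation $\pi(\rho)$ is irreducible, admissible, infinite-dimensional and generic. In this setting, the existence of a congruence subgroup $K_1(\gl^n) \subset \GL_2(\Ou_E)$ with $\dim_{\bar{\F}_p}\pi(\rho)^{K_1(\gl^n)} = 1$ is the mod $p$ analogue of Casselman's classical new vector theorem, which in residue characteristic $l \neq p$ follows from Vign\'eras's construction of the Kirillov model and her analysis of conductors for smooth irreducible mod $p$ representations of $\GL_2(E)$. The value of $n$ needed is the conductor of $\pi(\rho)$, which in turn matches the conductor of $\rho$ under the modified correspondence.

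In each of the remaining cases, $\pi(\rho)$ is assembled from three types of building blocks---unramified characters of $\GL_2(E)$, twists of $\mathrm{St}$ by unramified characters, and the cuspidal representation $\pi_1$---whose fixed spaces under congruence subgroups $K \subset \GL_2(\Ou_E)$ are well understood: unramified characters have $\dim \chi^K = 1$ for every such $K$; twists of Steinberg have $\dim(\mathrm{St}\otimes\chi)^I = 1$ for the Iwahori subgroup $I$; and the cuspidal $\pi_1$ satisfies $\pi_1^{K_1(\gl^n)} = 0$ for $n$ past its conductor. Given these, I would treat each extension $0 \to A \to \pi(\rho) \to B \to 0$ appearing in the Emerton--Helm description by exhibiting a congruence subgroup $K$ tailored so that $A^K = 0$ and $\dim B^K = 1$, and then use the long exact sequence of continuous $K$-cohomology to conclude $\pi(\rho)^K \simeq B^K$.

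The main obstacle is arranging that the connecting map $B^K \to H^1(K,A)$ vanishes. When there is no Steinberg constituent one may simply take $K$ to be an open pro-$l$ subgroup so that $H^i(K,-)$ vanishes in positive degree on smooth mod $p$ representations (because $l \neq p$), which kills the extension class automatically. The Steinberg cases are more delicate, and in particular the length three representations $\mathrm{env}(\pi_1)$ (when $q \equiv -1 \pmod p$) and the universal extension of the trivial character by $\mathrm{St}$ (when $q \equiv 1 \pmod p$) require an explicit description of the extension class following the Iwahori-spherical analysis of Emerton--Helm; this is where the bulk of the case-by-case computation lies, and where the previously missing $E = \Q_p$ case is now handled uniformly.
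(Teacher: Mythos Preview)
Your approach matches the paper's: both proceed by case analysis over the Emerton--Helm classification, invoke Vign\'eras's new vector theory for the irreducible generic case, and handle the extension cases via the long exact sequence of $K$-cohomology, using that pro-$l$ subgroups (with $l\neq p$) have vanishing higher cohomology on smooth $\bar{\F}_p$-modules to kill the connecting map when no Steinberg constituent is present.

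One refinement worth noting: your template ``choose $K$ with $A^K=0$ and $\dim B^K=1$'' does not quite cover the length-three cases $\mathrm{env}(\pi_1)$ (when $q\equiv -1\bmod p$) and the universal extension of $1$ by $\mathrm{St}$ (when $q\equiv 1\bmod p$). There the paper takes $K=\GL_2(\mathcal{O}_E)$, for which indeed $A^K=0$ but $\dim B^K=2$, and then computes the connecting map $\delta\colon B^K\to \mathrm{Ext}^1_{\GL_2(\mathcal{O}_E)}(1,A)$ explicitly by identifying $\delta(f_{(a,b)})$ with the pullback extension $(a+b)\mathcal{E}$, showing $\dim(\mathrm{Im}\,\delta)=1$ and hence $\dim\pi(\rho)^K=1$. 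You correctly anticipate that these cases require explicit work with the extension class, so this is a matter of adjusting the mechanism rather than the overall strategy. Also, a small slip: $\pi_1^{K_1(\gl^n)}=0$ holds for $n$ \emph{below} its conductor, not past it; in practice the paper uses $\pi_1^{I_1}=0$ (citing Vign\'eras) and $\pi_1^{\GL_2(\mathcal{O}_E)}=0$ rather than $K_1(\gl^n)$.
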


\begin{proof}
We do case by case analysis of the modified mod $\ell$ local Langlands correspondence.

 First assume that $\rho^{ss}$ is not a twist of $1\oplus \omega$, then by \cite[p. 491]{Helm13} $\pi(\rho)$ is an irreducible admissible representation of $\GL_2(E)$. The existence and uniqueness of Kirillov model for $\pi(\rho)$ is shown in \cite{Vigneras89}. It is known that the new vectors exists for such representations \cite[p. 64]{Vigneras89}. Therefore by the definition of new vectors there exists an open compact subgroup $K\subset \GL_2(\mathcal{O}_E)$ such that $\dim_{\bar{\mathbb{F}}_l}\pi(\rho)^K =1$.

Next we consider the Banal case (i.e. $q \not\equiv\pm 1$ mod $p$ and $\rho^{ss} = 1 \oplus \omega$), there are two possibilities for $\pi(\rho)$. Firstly when  $\pi(\rho) \simeq \rm{St} \otimes (\bar{\mid~\mid} \circ \det)$, we take $K = I$ where $I$ is the Iwahori subgroup of $\GL_2(\mcal{O}_E)$ as $\dim{\rm{St}}^I =1 $ and for any $g\in I$, $\bar{\mid~\mid} \circ \det$ is a trivial representation. Secondly when $\pi(\rho)$ is given by the following unique non-split exact sequence
$$0\rightarrow \rm{St}\otimes (\bar{\mid~\mid}\circ \det)\rightarrow \pi(\rho) \rightarrow (\bar{\mid~\mid}\circ \det)\rightarrow 0.$$
Taking $\GL_2(\mcal{O}_E)$-invariance for the above exact sequence we have 
$$0\rightarrow (\rm{St}\otimes (\bar{\mid~\mid}\circ \det))^{\GL_2(\mcal{O}_E)}\rightarrow \pi(\rho)^{\GL_2(\mcal{O}_E)} \rightarrow (\bar{\mid~\mid}\circ \det)^{\GL_2({\mcal{O}_E})}\rightarrow \Ext^1_{\GL_2(\mcal{O}_E)}(\bar{\mid~\mid}\circ \det, \rm{St}\otimes \bar{\mid~\mid}\circ \det)\to \cdots$$
We note that $\rm{St}^{\GL_2(\mcal{O}_E)} =0$, $\dim (\bar{\mid~\mid}\circ \det)^{\GL_2({\mcal{O}_E})} =1$ and $\Ext^1_{\GL_2(\mcal{O}_E)}(\bar{\mid~\mid}\circ \det, \rm{St}\otimes \bar{\mid~\mid}\circ \det) = \Ext^1_{\GL_2(\mcal{O}_E)}(1, \rm{St}) = \Ext^1_{\GL_2(\emph{k})}(1, \rm{St}) = 0$, therefore $K = \GL_2(\mcal{O}_E)$ works.

Next we consider the non-banal case with $q\equiv -1$ mod $p$. In this case we have three possibilities for $\pi(\rho)$. 
\begin{itemize}
\item Suppose $\pi(\rho)$ is given by $0 \to \pi_1\to\pi(\rho) \xrightarrow{\alpha}\ 1 \oplus (\bar{\mid~\mid}\circ \det) \to 0$, taking $\GL_2(\mcal{O}_E)$-invariance we get
$$0\to \pi(\rho)^{\GL_2(\mcal{O}_E)} \to 1\oplus (\bar{\mid~\mid}\circ \det)^{\GL_2(\mcal{O}_E)} \xrightarrow{\delta} \Ext_{\GL_2(\mcal{O}_E)}(1, \pi_1)\to \cdots,$$
since $\pi_1^{\GL_2(\mcal{O}_E)} = 0$. 
Let $\mcal{E}$ be the extension $0\to \pi_1 \to \alpha^{-1}(1,0) \to 1 \to 0 $. 
Let $f_{(a,b)} \in \Hom_{\GL_2(\mcal{O}_E)}(1, 1 \oplus (\bar{\mid~\mid}\circ \det)) = (1 \oplus (\bar{\mid~\mid}\circ \det) )^{\GL_2(\mcal{O}_E)}$ such that $1 \mapsto (a,b) \in 1 \oplus \bar{\mid~\mid}\circ \det$. Note that as $\GL_2(\mcal{O}_E)$-representation $\bar{\mid~\mid}\circ \det$ is just the trivial representation. We give the definition of the map $\delta$ as follows: 
 the connecting homomorphism $\delta$ evaluated at $f_{(a,b)}$ is given by the pull back of the following diagram:
\[
\begin{tikzcd}
  0 \arrow[r] & \pi_1 \arrow[d, "\rm{Id}"] \arrow[r] & \delta(f_{(a,b)}) \arrow[d] \arrow[r] & 1 \arrow[d, "f_{(a,b)}"] \arrow[r] & 0 \\
  0 \arrow[r] & \pi_1 \arrow[r] & \pi(\rho) \arrow[r, "\alpha"] & 1 \oplus \bar{\mid~\mid}\circ \det \ar[r] & 0,
\end{tikzcd}
\]
i.e., $\delta(f_{(a,b)}) = \pi(\rho)\times_{\bar{\F}_p} {\bar{\F}}_p = \lbrace (v,x) \in \pi(\rho)\times \bar{\F}_p~|~ \alpha(v) = f_{(a,b)}(x) \rbrace  = \lbrace (v,x) \in \pi(\rho)\times \bar{\F}_p~|~ \alpha(v) = xf_{(a,b)}(1)= x(a,b) \rbrace$. Its is easy to see that $\delta(f_{(a,b)}) = \alpha^{-1}(\bar{\F}_p(a,b))$. Therefore $\delta(f_{(a,b)}) = (a+b)\mcal{E}$, where $\mcal{E}$ is as above. By dimension comparison we get $\dim \pi(\rho)^{\GL_2(\mcal{O}_E)} =1$, because $\dim(\rm{Im}(\delta)) =1$ and $\dim(1\oplus  \bar{\mid~\mid}\circ \det)^{\GL_2(\mcal{O}_E)} = 2$.
\item Suppose $\pi(\rho)$ is given by $0 \to \pi_1 \to \pi(\rho) \to 1 \to 0$, taking $I_1$-invariance where $I_1 \subset \GL_2(\mcal{O}_E)$ is such that $I_1$ mod $\mfrak{l}$ is unipotent upper triangular subgroup of $\GL_2(k)$, we get
$$0\to \pi^{I_1} \to\pi(\rho)^{I_1}\to 1^{I_1} \to \Ext^1_{I_1}(1, \pi_1)\to\cdots$$
By Proposition 24 \cite{Vigneras89}, $\pi_1^{I_1} = 0$, also as $I_1$ is pro-$p$ group, any smooth $I_1$-representation is semi-simple. Therefore, $\Ext^1_{I_1}(1, \pi_1) = 0$ and $\dim(\pi(\rho))^{I_1} =1$ and we can take $K =I_1$.
\item Similarly, if $\pi(\rho)$ is given by $0\to\pi_1\to\pi(\rho) \to\bar{\mid ~\mid }\circ\det \to 0$, taking $I_1$-invariance and using the same reasoning as above we see that $K = I_1$ works in this case also. 
\end{itemize}
The last remaining non-banal case is when $q \equiv 1\text{ mod } p$ and in this case $\pi(\rho)$ can have two possibilities. 
\begin{itemize}
\item Suppose $\pi(\rho)$ is given by $0 \to \rm{St} \to \pi(\rho) \xrightarrow{\alpha} 1 \oplus 1\to 0$, taking $\GL_2(\mcal{O}_E)$-invariance we have 
$$0 \to \pi(\rho)^{\GL_2(\mcal{O}_E)} \to (1 \oplus 1)^{\GL_2(\mcal{O}_E)} \xrightarrow{\delta} \Ext^1_{\GL_2(\mcal{O}_E)}(1 \oplus 1, \rm{St})\to \cdots$$
Let $\mcal{E}$ be the $\GL_2(\mcal{O}_E)$-exact sequence $0 \to \rm{St}\to \alpha^{-1}(\bar{\F}_p, 0)\to 1\to 0$, and $f_{(a,b)} \in (1\oplus 1)^{\GL_2(\mcal{O}_E)} = \Hom_{\GL_2(\mcal{O}_E)}(1, 1\oplus 1)$ such that $1\mapsto (a,b)$. By definition of the connecting homomorphism $\delta$, $\delta(f_{(a,b)})$ is given by the pull back of the following diagram
\[
\begin{tikzcd}
  0 \arrow[r] & \rm{St} \arrow[d, "\rm{Id}"] \arrow[r] & V \arrow[d] \arrow[r] & 1 \arrow[d, "f_{(a,b)}"] \arrow[r] & 0 \\
  0 \arrow[r] & \rm{St} \arrow[r] & \pi(\rho) \arrow[r, "\alpha"] & 1 \oplus 1 \ar[r] & 0,
\end{tikzcd}
\]
i.e., $V = \lbrace (v,x) \in \pi(\rho)\times \bar{\F}_p |~ \alpha(v) = f_{(a,b)}(x) = x\cdot f_{(a,b)}(1) = x\cdot(a,b)\rbrace$, as before we identify $V$ with $(a+ b)\mcal{E}$. By comparing dimensions we have $\dim\pi(\rho)^{\GL_2(\mcal{O}_E)} = \dim\ker \delta =1$, hence we can take $K = \GL_2(\mcal{O}_E)$.
\item Suppose $\pi(\rho)$ is given by the unique non-split exact sequence $0 \to \rm{St}\to \pi(\rho)\to 1 \to 0$, here the arguments are similar to \cite[Theorem 5.2, (4)]{Cho15} so we do not prove it here. 
\end{itemize}

\end{proof}

\section{Proof of Theorem \ref{Mainthm}}\label{the proof}
\begin{proof}

Taking direct limit over all compact open subgroup $K_\mfrak{p} \subset \GL_2(F_{\mfrak{p}})$ in the exact sequence \eqref{no compact support} we get the following exact sequence
$$\cdots \xrightarrow{f_0} H^1_{M(K^\mfrak{p})^{ \rm{ord}}}(M(K^\mfrak{p})^{an}, \bar{\F}_p) \xrightarrow{f_1} H^1(M(K^\mfrak{p})^{an}, \bar{\F}_p) \xrightarrow{f_2} H^1(M(K^\mfrak{p})^{\rm{ss}}, \bar{\F}_p) \xrightarrow{f_3} H^2_{M(K^\mfrak{p})^{ \rm{ord}}}(M(K^\mfrak{p})^{an}, \F_p)\to \cdots $$
From this we get a following short exact sequence
$$0\to {\rm{Im}}~f_1 \to H^1(M(K^\mfrak{p})^{an}, \bar{\F}_p) \to {\rm{Im}}~ f_2 \to 0,$$
where ${\rm{Im}}~f_1\simeq H^1_{M(K^\mfrak{p})^{ \rm{ord}}}(M(K^\mfrak{p})^{an}, \bar{\F}_p) /\ker(f_1)$. Applying $\Hom_G(\pi_\mfrak{p}, \_)$ to the above sequence where $\pi_\mfrak{p}$ corresponds to $\rho|_\mfrak{p}$ under the conjectural mod $p$ LLC for $\GL_2(F_\mfrak{p})$, we get
$$0\to \Hom_G(\pi_\mfrak{p}, H^1_{M(K^\mfrak{p})^{ \rm{ord}}}(M(K^\mfrak{p})^{an}, \bar{\F}_p) /\ker(f_1)) \to \Hom_G(\pi_\mfrak{p},  H^1(M(K^\mfrak{p})^{an}, \bar{\F}_p)) \to \Hom_G(\pi_\mfrak{p}, {\rm{Im}}~f_2) \cdots$$
Since $\pi_\mfrak{p}$ is a supersingular representation (because $\rho|_\mfrak{p}$ is an irreducible representation) it cannot occur as a sub-quotient of $H^1_{M(K^\mfrak{p})^{ \rm{ord}}}(M(K^\mfrak{p})^{an}, \bar{\F}_p)$ because the cohomology is induced from Borel. Hence we get the following injection 
$$ \Hom_G(\pi_\mfrak{p},  H^1(M(K^\mfrak{p})^{an}, \bar{\F}_p)) \hookrightarrow \Hom_G(\pi_\mfrak{p}, {\rm{Im}}~f_2) \hookrightarrow\Hom_G(\pi_{\mfrak{p}}, \emph{H}^1(M(K^\mfrak{p})^{\rm{ss}}, \bar{\F}_p) ) ,$$
taking direct limit over all compact open subgroup $K^\mfrak{p}$ we get (note that $\Hom$ commutes with direct limit because $\pi_\mfrak{p}$ is a simple $\bar{\F}_p[G]$-module)
$$\Hom_G(\pi_\mfrak{p},  H^1_{\bar{\F}_p}) \hookrightarrow \varinjlim_{K^\mfrak{p}}\Hom_G(\pi_\mfrak{p}, {\rm{Im}}~f_2) \hookrightarrow\Hom_G(\pi_\mfrak{p}, \emph{H}^1_{\bar{\F}_p, {\rm{ss}}}) .$$
Taking $\mfrak{m}_\rho$-torsion where $\mfrak{m}_\rho$ is as in Section \ref{new vectors}, and using Conjecture \ref{BDJ} we get the following
$$\Hom_G(\pi_\mfrak{p},   H^1_{\bar{\F}_p}[\mfrak{m}_\rho]) \hookrightarrow \Hom_G(\pi_{\mfrak{p}}, \emph{H}^1_{\bar{\F}_p, {\rm{ss}}}[\mfrak{m}_\rho] ).$$
Tensoring both sides by $\pi_\mfrak{p}$ over $\bar{\F}_p$ and using Conjecture \ref{BDJ} we have
$$\pi_\mfrak{p} \otimes _{\bar{\F}_p} \Hom_G(\pi_\mfrak{p},   \rho \otimes(\otimes'_w\pi_{w})) \hookrightarrow \pi_\mfrak{p} \otimes _{\bar{\F}_p} \Hom_G(\pi_\mfrak{p}, \emph{H}^1_{\bar{\F}_p, {\rm{ss}}}[\mfrak{m}_\rho] ).$$
Now $\rho \otimes(\otimes'_{w\nmid p}\pi_w) \hookrightarrow  \Hom_G(\pi_\mfrak{p},   \rho \otimes(\otimes'_w\pi_{w}))$ (see \cite[p.269, Proposition 2(i)]{Bourbaki}) and using evaluation map $$\text{ev}: \pi_\mfrak{p} \otimes \Hom_G(\pi_\mfrak{p},\emph{H}^1_{\bar{\F}_p, {\rm{ss}}}[\mfrak{m}_\rho] ) \rightarrow \emph{H}^1_{\bar{\F}_p,{\rm{ ss}}}[\mfrak{m}_\rho],\quad(v, f) \mapsto f(v)$$
we get that $ \pi_\mfrak{p} \otimes _{\bar{\F}_p} \Hom_G(\pi_\mfrak{p}, \emph{H}^1_{\bar{\F}_p,{\rm{ss}}}[\mfrak{m}_\rho] ) \hookrightarrow \emph{H}^1_{\bar{\F}_p, {\rm{ss}}}[\mfrak{m}_\rho]  $ since $\pi_\mfrak{p}$ is irreducible representation of $G$ over $\bar{\F}_p$. Hence we have the following injection
\begin{align}
\rho\otimes\pi_\mfrak{p} \otimes (\otimes'_{w\nmid p}\pi_w) \hookrightarrow  \emph{H}^1_{\bar{\F}_p, {\rm{ss}}}[\mfrak{m}_\rho]. \label{inclusion}
\end{align}
Let $\Sigma'=\lbrace w\in \Sigma~| ~D \text{ splits at } w\rbrace$ and put $K = K_{\Sigma'} K^\Sigma$ where $K^\Sigma = \Pi_{w\not\in \Sigma}\GL_2(\mcal{O}_w)$ and $K_{\Sigma'}= \Pi_{w\in \Sigma'} K_w$ where we choose $K_w$ using Theorem \ref{vectors}, i.e., $\dim_{\bar{\F}_p} \pi_w^{K_w} = 1$. Taking $K$-invariance in \eqref{inclusion} we have
\begin{align}
\rho\otimes \pi_\mfrak{p}\otimes (\otimes_{w\in \Sigma\setminus \Sigma'}\pi_w) \hookrightarrow  \emph{H}^1_{\bar{\F}_p, {\rm{ss}}}[\mfrak{m}_\rho]^K, \label{mainthm}
\end{align}
as $G_F\times\GL_2(F_\mfrak{p})\times \Pi_{w\in\Sigma\setminus \Sigma'}(D\otimes_F F_w)$-representation.
If $\Sigma = \Sigma'$, then by \eqref{mainthm} we have $\rho \otimes \pi_\mfrak{p} \hookrightarrow  \emph{H}^1_{\bar{\F}_p, {\rm{ss}}}[\mfrak{m}_\rho]$ as $G_F\times\GL_2(F_\mfrak{p})$-representation which is similar to the result obtained by Chojecki~[Section 6.3]\cite{Cho15}. 
\end{proof}

\section{Lubin-Tate towers and the fundamental representation}\label{lubin-tate}
In this section we connect the cohomology of the Lubin-Tate tower with the completed cohomology of the Shimura curves. The motivation to look into the cohomology of Lubin-Tate towers comes from the classical local Langlands correspondence as it was realised in the cohomology of the Lubin-Tate tower. 
The aim of this section is two-fold, firstly to give a description of the supersingular locus of the Shimura curve and secondly to define the local fundamental representation. We will follow Carayol \cite{Carayol1986} for the both the definitions and notation.


Let $\mfrak{p}$ be a fixed uniformiser of $F_{\mfrak{p}}$, $\mcal{O}_{\mfrak{p}}$ be its ring of integers and $k$ be its residue field. Divisible $\mcal{O}_{\mfrak{p}}$-modules over $\bar{k}$ and their deformations appear naturally in the theory of bad reductions of Shimura curves. For an overview of the definitions and properties of divisible $\mcal{O}_{\mfrak{p}}$-modules we refer the reader to Drinfeld \cite{Drinfeld74} or Carayol \cite[Appendix]{Carayol86}. Here we state the results which is needed for our purpose. 
We start by discussing few basics about formal $\mcal{O}_{\mfrak{p}}$-modules and their deformations. 
For every integer $h\geq 1$, there exists a unique formal $\mcal{O}_{\mfrak{p}}$-module $\Sigma_h$ of height $h$ over $\bar{k}$ upto isomorphism. 
Let $\mcal{C}$ be the category of complete local noetherian $\hat{\mcal{O}}^{nr}_{\mfrak{p}}$ (maximal unramified)-algebras with residue field $\bar{k}$. Let $G$ be a divisible $\mcal{O}_{\mfrak{p}}$-module on $\bar{k}$ of height $h$. Recall that a deformation of $G$ over an object $A \in \mcal{C}$ is a pair $(\mcal{G}, \iota)$ consisting of formal $\mcal{O}_{\mfrak{p}}$-module $\mcal{G}$ over $A$ which is equipped with an isomorphism $\iota : G \to \mcal{G}_{\bar{k}}$ of formal $\mcal{O}_{\mfrak{p}}$-module over $\bar{k}$ where $\mcal{G}_{\bar{k}}$ denotes the reduction of $\mcal{G}$ modulo the maximal ideal $\mfrak{m}_A$ of $A$.
The functor which associates $A\in\mcal{C}$ to the set of deformations of $G$ over $A$ is representable by the ring $D_0^{G} \in \mcal{C}$ isomorphic to the ring of formal series in $h-1$ variables over $\hat{\mcal{O}}^{nr}_{\mfrak{p}}$.  We are interested in $h=2$.

Let $D$ and $\Gamma$ be as in Section \ref{shimura curves}. Let $\bar{D}$ be a quaternion algebra over $F$ which is non-split at $\tau$, $\mfrak{p}$ and also at all places where $D$ is non-split. Let $\bar{G} = \Res_{F/\Q}(\bar{D}^*)$, so $\bar{G}$ is a twisted inner form on $G = \Res_{F/\Q}(D^*)$. The center of $\bar{G}$ is identified with the center $Z$ of $G$. Let $\hat{Z(\Q)}$ denote the closure of $Z(\Q)$ in $Z(\A^f)$. Let us fix $\Sigma$ a formal  $\mcal{O}_{\mfrak{p}}$-module of height $2$ over $\bar{k}$ (upto isogeny) and isomorphisms $\bar{D}_{\mfrak{p}}^*\simeq$ Aut $\Sigma$, $\bar{D}\otimes \A^{f, \mfrak{p}} \simeq D \otimes \A^{f, \mfrak{p}}$. Denote $V_\mfrak{p}$ to be the vector space $F_\mfrak{p}^2$ with an action of $\GL_2(F_{\mfrak{p}})$ from right. Consider the product 
$$\tilde{\Delta} = \text{W}(\bar{k}/k) \times \text{Isom}(\wedge^2 V_\mfrak{p}, v(\Sigma)\otimes \rm{VL}_{\hat{\bar{\eta}}}),$$
where $L$ is the Lubin-Tate group over $\mcal{O}_{\mfrak{p}}$ and $\hat{\bar{\eta}} = \Spec (\hat{\bar{\F}}_\mfrak{p})$. For the definition of the functors $v$ and $V$ see \cite[p.437, Section 7.7]{Carayol1986}. The set $\tilde{\Delta}$ is a principal homogeneous space over the group ${\rm{W}}(\bar{k}/k)\times F_p^*$. Consider its subgroup isomorphic to $\Z$ consisting of pairs $(\sigma^n, \mfrak{p}^n)$ where $\sigma$ denotes the geometric Frobenius. Now consider the set $\Delta =\tilde{\Delta}/\Z$, i.e., we identify the elements $(w,\psi)\sim (w\sigma^n, \psi\mfrak{p}^n)$ for $(w,\psi) \in \tilde{\Delta}$. The set $\Delta$ now is a principal homogeneous space over $F_\mfrak{p}^*$ with an action of $\text{W}(F_\mfrak{p}^{ab}/F_\mfrak{p})$, $\GL_2(F_\mfrak{p})$ and Aut $\Sigma$, see \cite[p.445, Section 9.3]{Carayol1986}. Now the supersingular orbit $S = \varprojlim_{n,H} S_{n,H}$ admits a description as the $G(\A^f) \times W(F^{ab}_\mfrak{p}/F_{\mfrak{p}})$-equivariant quotient set
$$S = \Delta \times_{\bar{D}^\times(F)} \Gamma.$$
Every element $\delta\in\Delta$ corresponds to an equivalence class of polarised formal $\mcal{O}_\mfrak{p}$-module (see \cite[p.444 Section 9.2]{Carayol1986})
so for each $\delta\in \Delta$, let $\rm{LT}_{\delta}$ consider the Lubin-Tate tower which is the generic fibre of the deformation space of the formal group attached to $\delta$ and let $\rm{LT_\Delta} =\sqcup_{\delta\in \Delta} \text{LT}_{\delta}$ (see \cite{Dat12} for details on Lubin-Tate tower). Therefore pulling back the supersingular locus to characteristic zero we get 
$$M^{\rm{ss}} = \varprojlim_{n,H} M_{n,H}^{\rm{ss}} = \text{LT}_{\Delta} \times_{\bar{D}^\times(F)} \Gamma .$$
One can also get a description at a finite level as follows: let $K_n$ denote the kernel of $\bar{D}^{\times}(\mcal{O}_{\mfrak{p}})\to \bar{D}^\times({\mcal{O}_{\mfrak{p}}/\mfrak{p}^n\mcal{O}_{\mfrak{p}}})$ and let $H$ be an open compact subgroup of $\Gamma$. Then, $$M_{n,H}^{\rm{ss}} \simeq \text{LT}_{\Delta/K_n} \times_{{\bar{D}}^\times (F)} \Gamma/H,$$
where $\text{LT}_{\Delta/K_n} = \sqcup_{\delta \in\Delta/K_n} \text{LT}_\delta (\mfrak{p}^n)$, where $\text{LT}_{\delta}(\mfrak{p}^n)$ denotes the generic fibre of the deformation space of the formal group attached to $\delta$ with $\mfrak{p}^n$-level structure.

Following Chojecki \cite[Section 6.1]{Cho15}, we define the local fundamental representation as follows.
\begin{definition} \label{def lt}
With the notation as above define the local fundamental representation as 
$$\hat{H}^1_{\emph{LT}, \bar{\F}_p} = \varinjlim_n H^1(\emph{LT}_{\Delta/K_n}, \bar{\F}_p).$$ 
\end{definition}
By the description of the supersingular points at a finite level we have
\begin{align*}
H^1(M_{n,H}^\text{ss}, \bar{\F}_p) &= H^1(\text{LT}_{\Delta/K_n}\times_{\bar{D}^{\times}(F)} \Gamma/H, \bar{\F}_p)\\
&= \lbrace f:\bar{D}^\times (F) \setminus \Gamma/H \to H^1(\text{LT}_{\Delta/K_n}, \bar{\F}_p)\rbrace.
\end{align*}
First taking direct limit over $n$ and then over $H$ we get 
\begin{align*}
\varinjlim_nH^1(M_{n,H}^\text{ss}, \bar{\F}_p) &\simeq \lbrace f:\bar{D}^\times (F) \setminus \Gamma/H \to \varinjlim_n H^1(\text{LT}_{\Delta/K_n}, \bar{\F}_p)\rbrace,\\
\varinjlim_H\varinjlim_n H^1(M_{n,H}^\text{ss}, \bar{\F}_p) &\simeq \lbrace f: \bar{D}^\times (F)\setminus \Gamma\to \hat{H}^1_{\text{LT}, \bar{\F}_p} \rbrace,\\
 H^1_{\bar{\F}_p, \text{ss}} &\simeq \lbrace f: \bar{D}^\times (F)\setminus \bar{D}^\times(\A^f) \to \hat{H}^1_{\text{LT}, \bar{\F}_p} \rbrace^{\bar{D}^\times (F_{\mfrak{p}})},\\
 &\simeq ( \lbrace f: \bar{D}^\times (F)\setminus \bar{D}^\times(\A^f) \to \bar{\F}_p \rbrace \otimes_{\bar{\F}_p} \hat{H}^1_{\text{LT}, \bar{\F}_p})^{\bar{D}^\times (F_{\mfrak{p}})}.
\end{align*}
Denoting $\textbf{F} = \lbrace f: \bar{D}^\times (F)\setminus \bar{D}^\times(\A^f) \to \bar{\F}_p \rbrace$, then 
\begin{align}
H^1_{\bar{\F}_p, \text{ss}} \simeq (\textbf{F} \otimes_{\bar{\F}_p} \hat{H}^1_{\text{LT}, \bar{\F}_p})^{\bar{D}^\times (F_\mfrak{p})}. \label{lt}
\end{align}
 We would like to compare the mod $p$ Hecke algebras of $D^\times$ and $\bar{D}^\times$. Let us denote ${\bf{T}}^{D}(K_\Sigma)$ and ${\bf{T}}^{\bar{D}}(K_\Sigma)$ to be the Hecke algebra corresponding to $D$ and $\bar{D}$ respectively and $T_w$ for the Hecke operators for $w\not\in \Sigma$ (see \cite[Section 6.3]{Cho15} for details). Taking $K^\Sigma$-invariant on $\bf{F}$, there is an action of the Hecke algebra ${\bf{T}}^{\bar{D}}(K_\Sigma)$. Following we state a Serre type conjecture comparing the eigenvalues of the $\bar{\F}_p$-Hecke algebras which is a generalisation of \cite[TH\'EOR\`EME, p.282]{Serre96}.
\begin{conj}{\label{Serretype}}
Let $D$ and $\bar{D}$ be as above. Then the systems of eigenvalues for $(T_w)$ of ${\bf{T}}^{\bar{D}}_{\Sigma}$ on $\bf{F}$ are in bijection with the systems of eigenvalues for $(T_w)$ of ${\bf{T}}^D(K_\Sigma)$ coming from mod $p$ modular forms. 
\end{conj}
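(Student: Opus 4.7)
The plan is to adapt Serre's original argument from \cite{Serre96} to the setting of Shimura curves over the totally real field $F$. The isomorphism \eqref{lt}, namely $H^1_{\bar{\F}_p, \text{ss}} \simeq (\textbf{F} \otimes_{\bar{\F}_p} \hat{H}^1_{\text{LT}, \bar{\F}_p})^{\bar{D}^\times(F_\mfrak{p})}$, already realises $\textbf{F}$ inside the supersingular cohomology of $M_K$. Since the Hecke operators $T_w$ for $w \notin \Sigma$ are supported away from $\mfrak{p}$ and away from the ramification of both $D$ and $\bar{D}$, under the identification $\bar{D} \otimes \A^{f,\mfrak{p}} \simeq D \otimes \A^{f,\mfrak{p}}$ these operators act compatibly on both sides, so the eigensystems on $\textbf{F}$ are naturally related to those on $H^1_{\bar{\F}_p, \text{ss}}$.

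First I would establish an Eichler--Shimura type congruence for the quaternionic Shimura curve: for $w \notin \Sigma$, the operator $T_w$ acts on $H^1(M_K, \bar{\F}_p)$ as $\text{Frob}_w + \text{N}(w) S_w \text{Frob}_w^{-1}$ (suitably interpreted in the quaternionic setting). Combined with a Deuring-type description of supersingular formal $\mcal{O}_\mfrak{p}$-modules in terms of ideal classes of a maximal order in $\bar{D}$, this identifies the Hecke action on the supersingular side with the Brandt matrix action on $\textbf{F}$, proving that the eigensystems on $\textbf{F}$ match those occurring in $H^1_{\bar{\F}_p, \text{ss}}$.

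To pass from supersingular eigensystems to eigensystems in the full cohomology $H^1_{\bar{\F}_p}$ of $M_K$, I would use the long exact sequence \eqref{no compact support} together with the structural observation from Section \ref{Completed cohomology}: the ordinary piece $H^1_{M_K^\text{ord}}(M_K, \bar{\F}_p)$ is parabolically induced from a Borel subgroup of $\GL_2(F_\mfrak{p})$, so its Hecke eigensystems at $\mfrak{p}$ are of principal-series type. Hence an eigensystem of ``non-principal-series'' type on $H^1(M_K, \bar{\F}_p)$ necessarily restricts non-trivially to $H^1_{\bar{\F}_p, \text{ss}}$, giving a class in $\textbf{F}$; the principal-series-type case would be matched separately with the Eisenstein-like pieces of $\textbf{F}$. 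The reverse direction requires lifting an eigensystem on the supersingular cohomology to one on $H^1_{\bar{\F}_p}$.

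The main obstacle lies in controlling the connecting homomorphism $H^1(M_K^\text{ss}, \bar{\F}_p) \to H^2_{M_K^\text{ord}}(M_K, \bar{\F}_p)$ in \eqref{no compact support}. Since the mod $p$ cohomology is not semisimple and there exist non-trivial extensions between principal-series and supersingular representations (see \cite{Hu17}), an eigensystem appearing in the supersingular cohomology may be annihilated by this boundary map and thus fail to appear in $H^1_{\bar{\F}_p}$. Serre bypasses this in the classical case $F=\Q$ via explicit Brandt matrix computations underpinned by the Deuring correspondence for supersingular elliptic curves; a general totally-real-field proof would need a careful study of these extensions, likely combined with a \v{C}erednik--Drinfeld style uniformisation of the supersingular locus and a version of the Jacquet--Langlands correspondence that behaves well modulo $p$ in the non-banal case.
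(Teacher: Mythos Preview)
The paper does not prove this statement: it is stated as Conjecture~\ref{Serretype}, a generalisation of Serre's theorem \cite[TH\'EOR\`EME, p.~282]{Serre96}, and is then \emph{assumed} as a hypothesis in the proof of Theorem~\ref{LT}. There is no proof in the paper for you to compare against.

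Your proposal is not a proof either, and to your credit you essentially say so. The outline you give---Eichler--Shimura congruence, Deuring-type description of the supersingular locus via $\bar D$, and the long exact sequence~\eqref{no compact support}---is a reasonable heuristic for why one should expect the conjecture to hold, and it correctly isolates the genuine difficulty: controlling the connecting map $H^1(M_K^{\rm ss},\bar\F_p)\to H^2_{M_K^{\rm ord}}(M_K,\bar\F_p)$ in characteristic $p$, where the failure of semisimplicity and the existence of non-trivial extensions between principal-series and supersingular representations (as in \cite{Hu17}) block the naive argument. But that is precisely the step you do not carry out; invoking ``a version of the Jacquet--Langlands correspondence that behaves well modulo $p$ in the non-banal case'' is not a proof step, it is a restatement of the problem. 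So what you have written is a motivated sketch of why the conjecture is plausible, together with an honest identification of the obstruction---which is appropriate given that the authors themselves leave it as a conjecture.
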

\subsection*{Proof of Theorem \ref{LT}}
\begin{proof}
The above conjecture allows us to identify the maximal ideals of ${\bf{T}}^D(K_\Sigma)$ with those of ${\bf{T}}^{\bar{D}}(K_\Sigma)$, so let $\mfrak{m}_\rho$ be the maximal ideal corresponding to $\rho$ as in Section~\ref{new vectors} and define $\sigma_\mfrak{m_\rho} := {\bf{F}}[\mfrak{m_\rho}]^K$ (where $K= K_{\Sigma'}K^\Sigma$ be as in Section \ref{the proof}) which is a representation of $\bar{D}^\times (F_{\mfrak{p}})$. Taking $K$-invariance in \eqref{lt} which commutes with $\bar{D}^\times (F_\mfrak{p})$-invariants, we get
\begin{align*}
(H^1_{\bar{\F}_p, \text{ss}})^K \simeq (\textbf{F}^K \otimes_{\bar{\F}_p} \hat{H}^1_{\text{LT}, \bar{\F}_p})^{\bar{D}^\times (F_\mfrak{p})}.
\end{align*}
Following \cite[Section 6.3]{Cho15}, we define $\sigma_{\mfrak{m}_\rho}^\vee = \Hom_{\bar{\F}_p}(\sigma_{\mfrak{m}_\rho}, \bar{\F}_p)$. Taking $[\mfrak{m}_\rho]$-part we get
\begin{align*}
(H^1_{\bar{\F}_p, \text{ss}}[\mfrak{m}_\rho])^K \simeq (\sigma_{\mfrak{m}_\rho} \otimes_{\bar{\F}_p} \hat{H}^1_{\text{LT}, \bar{\F}_p})^{\bar{D}^\times (F_\mfrak{p})}.
\end{align*}
Let $\hat{H}^1_{\text{LT}, \bar{\F}_p}[\sigma_{\mfrak{m}_\rho}^\vee]:=(\sigma_{\mfrak{m}_\rho} \otimes_{\bar{\F}_p} \hat{H}^1_{\text{LT}, \bar{\F}_p})^{\bar{D}^\times (F_\mfrak{p})}$, then by Theorem \ref{Mainthm}, we have the following injection
\begin{align*}
\rho\otimes \pi_\mfrak{p}\otimes (\otimes_{w\in \Sigma\setminus \Sigma'}\pi_w) \hookrightarrow  \emph{H}^1_{\bar{\F}_p, ss}[\mfrak{m}_\rho]^K \simeq \hat{H}^1_{\text{LT}, \bar{\F}_p}[\sigma_{\mfrak{m}_\rho}^\vee], 
\end{align*}
proving Theorem \ref{LT}.
\end{proof}

\section{Bounds on the cohomology of modular curve supported on ordinary locus}

Let $\bar{\rho}: G_{\Q} :=\rm{Gal}(\bar{\Q}/\Q) \rightarrow \GL_2(\bar{\F}_p)$ be a continuous, absolutely irreducible Galois representation (hence modular) and let $\mfrak{m}$ be the maximal ideal of the corresponding Hecke algebra associated to $\bar{\rho}$.
 We define $\Sigma_0$ to be the set of primes dividing the Artin conductor of $\bar{\rho}$ (\cite[Section 1.2]{Serre87}). Let $K_{\Sigma_0} \subset \Pi_{l\in\Sigma_0}\GL_2(\Z_l)$ be an allowable level (see \cite[Section 6]{Morra13}), $I_t$ be the Iwahori subgroup of $\GL_2(\Z_p)$ and let $K_0^\Sigma = \Pi_{l\not\in \Sigma_0} \GL_2(\Z_l)$. Fixing the level $I_tK_{\Sigma_0}K_0^\Sigma$ and taking the $[\mfrak{m}]$-torsion in the above long exact sequence we get 
$$\cdots\rightarrow H^0(X(I_tK_{\Sigma_0}K_0^\Sigma)_{\rm{ss}}, \bar{\F}_p)[\mfrak{m}]\xrightarrow{f} H^1_{X_{\rm{ord}}}(X(I_tK_{\Sigma_0}K_0^\Sigma)^{an}, \bar{\F}_p)[\mfrak{m}]\xrightarrow{g} H^1(\cX(I_tK_{\Sigma_0}K_0^\Sigma)^{an}, \bar{\F}_p)[\mfrak{m}] \cdots$$
Let $d:=\dim_{\bar{\F}_p}(\bigotimes_{l\in\Sigma_0}\pi(\bar{\rho}|_{G_{\Q_l}}))^{K_{\Sigma_0}}$, where $\pi(\bar{\rho}|_{G_{\Q_l}})$ is the smooth mod $p$ representation of $\GL_2(\Q_l)$ associated to $\bar{\rho}|_{G_{\Q_l}}$ by Emerton-Helm and let 
\[
s= \text{ number of supersingular points on the modular curve } X(I_tK_{\Sigma_0}K_0^\Sigma).
\] 
By \cite[Theorem 1.2]{MR3650223}, we can compute $h_{\overline{\rho}_p}=\dim H^1_{X_{\rm{ord}}}(\cX_0(Np^m)^{an}, \bar{\F}_p)[\mfrak{m}]$.  Note that this quantity depends on the local Galois representation $\overline{\rho}_p$. Let $\Sigma_0$ be the set of primes diving the Artin conductor of $\rho$, let $K_{\Sigma_0}$ be an allowable level for $\bar{\rho}$,
\begin{lemma}
 Let $p\geq 3$ and $\bar{\rho}$ be as in  \cite[Theorem 1.2]{MR3650223}. Then 
 \[
 \dim H^1_{X_{\rm{ord}}}(\cX_0(Np^m)^{an}, \bar{\F}_p)[\mfrak{m}] \leq h_{\overline{\rho}_p} +s.
 \]
\end{lemma}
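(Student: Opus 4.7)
The plan is to read the bound directly off the long exact sequence of cohomology with support on the ordinary locus displayed immediately before the lemma, combined with a rank-nullity decomposition of its middle term. The two numerical inputs are: the dimension of $H^0$ of the (finite, reduced) supersingular locus, which equals $s$; and the dimension of the $\mfrak{m}$-torsion of the full first cohomology of the modular curve, which is computed by \cite[Theorem 1.2]{MR3650223} and is precisely what the quantity $h_{\bar{\rho}_p}$ records.

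Concretely, first I would extract from the displayed long exact sequence the three-term segment
$$H^0(X(I_tK_{\Sigma_0}K_0^\Sigma)_{\mathrm{ss}}, \bar{\F}_p)[\mfrak{m}] \xrightarrow{f} H^1_{X_{\mathrm{ord}}}(X(I_tK_{\Sigma_0}K_0^\Sigma)^{an}, \bar{\F}_p)[\mfrak{m}] \xrightarrow{g} H^1(\cX(I_tK_{\Sigma_0}K_0^\Sigma)^{an}, \bar{\F}_p)[\mfrak{m}],$$
and apply rank-nullity to the restriction of $g$ to the middle term, giving
$$\dim H^1_{X_{\mathrm{ord}}}[\mfrak{m}] = \dim \ker\!\bigl(g|_{H^1_{X_{\mathrm{ord}}}[\mfrak{m}]}\bigr) + \dim \mathrm{Im}\bigl(g|_{H^1_{X_{\mathrm{ord}}}[\mfrak{m}]}\bigr).$$
Exactness of the original (pre-torsion) sequence at $H^1_{X_{\mathrm{ord}}}$ yields $\ker(g|_{H^1_{X_{\mathrm{ord}}}[\mfrak{m}]}) = \ker(g) \cap H^1_{X_{\mathrm{ord}}}[\mfrak{m}] = \mathrm{Im}(f) \cap H^1_{X_{\mathrm{ord}}}[\mfrak{m}]$, which embeds into $\mathrm{Im}(f)$; since $\mathrm{Im}(f)$ is a quotient of $H^0(X_{\mathrm{ss}}, \bar{\F}_p)$ and the latter has dimension $s$, the kernel contributes at most $s$. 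On the other hand, the image of $g|_{H^1_{X_{\mathrm{ord}}}[\mfrak{m}]}$ sits inside $H^1(\cX_0(Np^m)^{an}, \bar{\F}_p)[\mfrak{m}]$, whose dimension is at most $h_{\bar{\rho}_p}$ by the cited theorem. Summing the two bounds gives the asserted inequality.

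No serious obstacle is anticipated: the argument is essentially the formal content of left-exactness of $[\mfrak{m}]$-torsion, the finiteness of the supersingular locus, and the external computation $h_{\bar{\rho}_p}$. The only clarifying point worth recording is that $h_{\bar{\rho}_p}$ must be read as the dimension of the $\mfrak{m}$-torsion in the full $H^1(\cX_0(Np^m)^{an}, \bar{\F}_p)$ rather than in its ordinary-support variant, since otherwise the lemma would reduce to a tautology; with that reading in place the inequality is immediate from the decomposition above.
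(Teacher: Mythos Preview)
Your proposal is correct and follows essentially the same approach as the paper: decompose the middle term via rank--nullity along the displayed three-term segment, then bound $\ker g$ by the image of $f$ (hence by $s$) and $\mathrm{Im}\,g$ by $h_{\bar\rho_p}$ using \cite[Theorem~1.2]{MR3650223}. If anything, your treatment of the $[\mfrak{m}]$-torsion step is slightly more careful than the paper's, which writes the equality $\dim = \dim(\mathrm{Im}\,f)+\dim(\mathrm{Im}\,g)$ without explicitly justifying exactness after passing to torsion.
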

\begin{proof}
By the above exact sequence we have $$\dim H^1_{X_{\rm{ord}}}(X_0(Np^m)^{an}, \bar{\F}_p)[\mfrak{m}] = \dim(\text{image of }f) + \dim (\text{image of g}).$$ 
By  we have that $\dim (\text{image of g})\leq h_{\overline{\rho}_p}$ and $\dim (\text{image of f}) \leq $ number of supersingular points on the modular curve $ X(I_tK_{\Sigma_0}K_0^\Sigma)$. Therefore 
\[
\dim H^1_{X_{\rm{ord}}}(X_0(Np^m)^{an}, \bar{\F}_p)[\mfrak{m}] \leq h_{\overline{\rho}_p} + s.
\]
\end{proof}

\bibliographystyle{alpha}
\bibliography{domp}
\end{document}